\newtheorem{thm}{Theorem}
\newtheorem{lemma}[thm]{Lemma}
\newtheorem{lem}[thm]{Lemma}
\newtheorem{corollary}[thm]{Corollary}
\newtheorem{conjecture}[thm]{Conjecture}
\newtheorem{prop}[thm]{Proposition}
\theoremstyle{definition}
\newtheorem{defn}[thm]{Definition}
\newtheorem{remark}[thm]{Remark}
\numberwithin{thm}{section}
\newcommand{\N}{\mathbb{N}}
\newcommand{\D}{\mathbb{D}}
\newcommand{\U}{\mathbb{U}}
\newcommand{\Z}{\mathbb{Z}}
\newcommand{\R}{\mathbb{R}}
\renewcommand{\P}{\mathbb{P}}
\newcommand{\Q}{\mathbb{Q}}
\newcommand{\C}{\mathbb{C}}
\newcommand{\T}{\mathbb{T}}
\newcommand{\PP}{\mathcal{P}}
\newcommand{\sub}{\subseteq}
\newcommand{\bra}[1]{\left(#1\right)}
\newcommand{\red}[1]{\color{red}#1\color{black}}
\DeclarePairedDelimiter\floor{\lfloor}{\rfloor}
\newcommand{\ra}{\rightarrow}
\newcommand{\conj}[1]{\overline{#1}}
\newcommand{\abs}[1]{\left\lvert {#1} \right \rvert}
\newcommand{\e}{\mathrm{e}}
\newcommand{\Ind}{\mathbbm{1}}
\newcommand{\al}{\alpha}
\newcommand{\ep}{\varepsilon}
\newcommand{\be}{\beta}
\newcommand{\de}{\delta}
\newcommand{\ph}{\varphi}
\title{Bounded Exponential Sums with Multiplicative Coefficients}
\author{Pierre-Alexandre Bazin}
\email{bazin@imj-prg.fr}
\address{Institut de Math\'ematiques de Jussieu-Paris Rive Gauche, Universit\'e Paris Cit\'e, Sorbonne Universit\'e, CNRS}
\author{Ihor Pylaiev}
\email{ip406@cam.ac.uk}
\address{Faculty of Mathematics, University of Cambridge}
\author{Fred Tyrrell}
\email{fred.tyrrell@bristol.ac.uk}
\address{Fry Building, School of Mathematics, University of Bristol}
\date{\today}
\begin{document}

\begin{abstract}
We investigate when the exponential sum $S_f(x,\alpha) := \sum_{n\le x}f(n)\e(n\alpha)$ is bounded, for a multiplicative function $f$ and $\alpha\in\mathbb{R}$. We show that under natural assumptions, $S_f(x,\alpha)$ is bounded only when $f$ is very close to a twisted Dirichlet character $\chi(n)n^{it}$. We obtain sharper classification results for functions that are completely multiplicative or take only finitely many values, including a complete classification in the case when $f$ is completely multiplicative and $\al$ is irrational. We also prove a stronger classification under the assumption that the sum is bounded for a positive measure set of $\alpha$.
\end{abstract}

\maketitle
\tableofcontents

\section{Introduction}
\subsection{Background and motivation}
Exponential sums involving multiplicative functions\footnote{A function $f: \N \ra \C$ is \emph{multiplicative} if $f(mn) = f(m)f(n)$ for all coprime $m,n \in \N$. If $f(mn)=f(m)f(n)$ for all $m,n \in \N$, we say $f$ is \emph{completely multiplicative.}} form a central object of study in analytic number theory, bridging additive and multiplicative structures in the integers. For a multiplicative function \( f : \mathbb{N} \to \mathbb{C} \), we consider sums of the form
\[
S_f(x, \alpha) := \sum_{n \leq x} f(n) \e(n\alpha),
\]
where \( \e(\alpha) = \e^{2\pi i \alpha} \) and \( \alpha \in \mathbb{R} \) is fixed. Understanding the size and behaviour of exponential sums with multiplicative coefficients sheds light on how multiplicative functions interact with additive characters, with applications to questions of equidistribution modulo 1, the pseudorandomness of arithmetic functions, and the classification of multiplicative functions according to their additive correlations.
\bigbreak
A classical result of Daboussi \cite{daboussi1975fonctions} shows that if \( f \) is multiplicative and bounded, then \( S_f(x, \alpha) = o(x) \) for all irrational \( \alpha \). Montgomery and Vaughan \cite{montgomery1977exponential} further developed general bounds for such sums, achieving a close to optimal upper bound for these sums on major arcs. Several generalisations and extensions of these results have since appeared, and exponential sums involving multiplicative functions remain an active area of research within analytic number theory; see, for example, the work of Daboussi-Delange \cite{daboussi1982multiplicative}, K{\'a}tai \cite{katai1986remark}, Bachmann \cite{bachman1999exponential}, and Matom{\"a}ki-Radziwi{\l}{\l}-Tao-Ter{\"a}v{\"a}inen-Ziegler \cite{matomaki2023higher}. Recent research has also been conducted on exponential sums with random multiplicative coefficients \cite{hardy2024bounds,benatar2022moments},  as well as the distribution of exponential character sums \cite{bober2025distribution}.
\bigbreak
More recently, Granville and Soundararajan \cite{granville2014multiplicative} formalised the orthogonality between additive characters and multiplicative functions via the \emph{pretentious approach}, introducing a quantitative framework to measure the extent to which a multiplicative function mimics a structured object such as \( n^{it} \chi(n) \). This perspective unifies and generalises many earlier results by showing that cancellation in exponential sums is intimately tied to the `distance' between \( f \) and such structured functions, and was recently used by La Bret{\`e}che and Granville \cite{de2022exponential} to continue the study of these exponential sums over minor arcs.
\bigbreak
This paper investigates the most extreme version of this cancellation phenomenon, namely when $S_f\bra{x,\al}$ is bounded. This is motivated by work on \emph{discrepancy} problems, in particular Tao's solution to the Erdős Discrepancy Problem \cite{tao2015erdos}. 
\smallbreak
The Erdős Discrepancy Problem was a long-standing conjecture of Erdős \cite{erdos}, which states that for \emph{any} \( f: \mathbb{N} \to \{-1, +1\} \), the discrepancy
\[
\sup_{d,n} \left| \sum_{j=1}^n f(dj) \right|
\]
is unbounded. One of the key ingredients of Tao's solution was the breakthrough of Matom{\"a}ki-Radziwi{\l}{\l} \cite{MR} and a result from the first Polymath project, which showed that solving the problem in the case where $f$ is completely multiplicative was sufficient to prove the full conjecture. Numerous investigations into the discrepancy of various arithmetic functions have since appeared in the literature, including \cite{klurman2017correlations}, \cite{klurman2018rigidity}, \cite{klurman2021multiplicative}, \cite{klurman2024beyond}, \cite{aymone2022complex}, \cite{Aymone} and \cite{frantzikinakis2019good}. For further background on Tao's solution to the Erd\H{o}s Discrepancy Problem, see the survey article of Soundararajan \cite{soundararajan2018tao}.
\smallbreak
Particularly relevant to the work in this paper is the work of Frantzikinakis \cite{frantzikinakis2019good} on the \emph{weighted} Erdős discrepancy problem, which extends Tao’s ideas by asking for which weight functions \( w: \mathbb{N} \to \mathbb{C} \) it is possible to find sequences \( f(n) \in \{-1,+1\} \) such that the weighted discrepancy
\[
\sup_{d,n} \left| \sum_{j=1}^n f(dj) w(j) \right|
\]
remains bounded. In particular, Frantzikinakis demonstrated that the weighted discrepancy is unbounded when the weight function is of the form \( w(n) = \e(n^k \alpha) \) for \( k \geq 2 \), using ergodic methods. The story when $k=1$, however, is rather different --- indeed simply letting $f=1$ and $\al \notin \Z$, we have 
\[\abs{S_1(x,\al)} = \abs{\sum\limits_{n \leq x}\e\bra{n \al}} = \abs{\e\bra{\al}\frac{1-\e\bra{\floor*{x}\al}}{1-\e\bra{\al}}} \leq \frac{2}{\abs{1-\e\bra{\al}}},\]
so the sum can be bounded in this case.

\subsection{Main results}
In this paper, we investigate when exponential sums of the form $S_f(x,\al)$ can remain bounded as \( x \to \infty \), for a fixed value of \( \alpha \). Our primary aim is to classify the multiplicative functions \( f \) for which
\[
\sup\limits_{x}\abs{S_f(x,\al)} < \infty.
\]
Our first main result provides a general classification of multiplicative functions with bounded exponential sums.\footnote{Here, and throughout, $\U = \{z \in \C: \abs{z}=1\}$ is the unit circle in $\C$. We will also use $\T = \R/\Z$ to denote the torus.}
\begin{thm}\thlabel{stronger}
    Let $f: \N \ra \U \cup \{0\}$ be a multiplicative function, and assume that
    \[\sum_{p:f(p) = 0}\frac{1}{p} < \infty.\]
   If, for some $\al \in \R$,
    \[\sup\limits_{x}\abs{S_f(x,\al)} < \infty,\]
    then there is a Dirichlet character $\chi$ and a real number $t \in \R$ such that
    \[\sum_{p} \max_{k}\abs{f(p^k)-\chi(p^k)p^{ikt}} < \infty.\]
\end{thm}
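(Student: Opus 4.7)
The plan is to follow the pretentious approach of Granville-Soundararajan \cite{granville2014multiplicative}, using the pretentious distance $\mathbb{D}(f,g;x)^2 := \sum_{p \leq x}(1-\mathrm{Re}(f(p)\overline{g(p)}))/p$. The argument has two main stages: first, identify parameters $\chi$ and $t$ so that $\mathbb{D}(f,\chi(n)n^{it};\infty) < \infty$; second, upgrade this averaged statement to the required uniform convergence on prime powers.

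For the first stage, I would apply a Halász-type theorem for exponential sums (as in \cite{granville2014multiplicative} or \cite{de2022exponential}), which bounds $|S_f(x;\al)|/x$ in terms of $\min_{\chi,|t|\leq T}\mathbb{D}(f,\chi(n)n^{it};x)^2$, with $\chi$ a Dirichlet character of modulus governed by a rational approximation $a/q$ to $\al$. Since Halász provides only an upper bound, pretension is not forced merely by $|S_f(x;\al)| = o(x)$. To obtain the structure, I would combine Halász with the Parseval identity $\int_0^1 |S_f(x;\be)|^2 \,d\be = \sum_{n\leq x}|f(n)|^2$, which under the hypothesis $\sum_{p\in T}1/p < \infty$ is $\asymp x$. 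This produces some $\be_0$ with $|S_f(x;\be_0)| \gg x^{1/2}$; applying Halász in reverse at $\be_0$ yields a candidate $(\chi, t)$ with bounded $\mathbb{D}(f,\chi(n)n^{it};x)^2$. A Diophantine comparison of $\be_0$ and $\al$, combined with the hypothesised boundedness at $\al$, then confirms that this same $(\chi,t)$ is the appropriate pretentious target.

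For the second stage, I would argue by contradiction. Suppose for some $\ep > 0$ there are arbitrarily large primes $p$ and exponents $k = k(p) \geq 1$ with $|f(p^k) - \chi(p^k)p^{ikt}| \geq \ep$. Decomposing $S_f(x;\al)$ by the exact power of $p$ dividing $n$ (using multiplicativity),
\[
S_f(x;\al) = \sum_{j \geq 0} f(p^j) \sum_{\substack{m \leq x/p^j \\ (m,p)=1}} f(m)\,e(p^j m\al),
\]
and using the weak pretension from the first stage to evaluate each inner sum (a multiplicative function supported on integers coprime to $p$, still pretending to a twisted character), the deviation at $j = k$ prevents the necessary cancellation among the terms indexed by $j$ and yields a contribution to $S_f(x;\al)$ that grows with $x$ as $p$ varies, contradicting the hypothesis. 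The uniformity in $k$ follows because the same contradiction applies for every choice of $k$.

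The main obstacle is the first stage: extracting pretension from merely bounded partial sums. The Halász framework gives upper bounds rather than lower bounds, so one must combine it with orthogonality (Parseval) and a careful Diophantine analysis of $\al$ to rule out non-pretentious $f$. The hypothesis $\sum_{p\in T}1/p<\infty$ is essential both for the Parseval lower bound $\sum_{n\leq x}|f(n)|^2 \asymp x$ and for making the comparison between $f$ and a Dirichlet character meaningful. The upgrade in the second stage from the averaged statement to uniform convergence over all prime powers is a characteristic rigidity argument, and it relies crucially on the absolute boundedness of $S_f(x;\al)$ rather than merely on $o(x)$ cancellation.
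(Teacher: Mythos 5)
Your two-stage outline matches the paper's high-level strategy, but the methods you propose for both stages have genuine gaps.

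\textbf{Stage 1.} You correctly observe that Hal\'asz gives only an upper bound and therefore cannot, by itself, detect pretension from $|S_f(x;\alpha)|=O(1)$. But the fix you propose does not work. Parseval only gives $\int_0^1 |S_f(x;\beta)|^2\,d\beta \asymp x$, hence some $\beta_0$ with $|S_f(x;\beta_0)| \gg x^{1/2}$. This is far too weak: Hal\'asz only forces pretension when the exponential sum is $\gg x$ (a positive proportion), and a $\sqrt{x}$-size value is consistent with $f$ being entirely non-pretentious (it is the generic, square-root-cancellation behaviour). Moreover, even if you could extract pretension from behaviour at $\beta_0$, it is entirely unclear --- and you do not explain --- how a ``Diophantine comparison of $\beta_0$ and $\alpha$'' would tie that $(\chi,t)$ to the boundedness hypothesis \emph{at $\alpha$}. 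The paper instead runs a van der Corput argument directly at $\alpha$: boundedness of $S_f(x;\alpha)$ implies $\tfrac{1}{\log x}\sum_{n\le x}\tfrac{1}{n}\bigl|\sum_{n\le m\le n+H}f(m)e(m\alpha)\bigr|^2 = O(1)$, which by expanding the square forces a large logarithmically-averaged two-point correlation $\sum_{n\le x}\tfrac{f(n)\overline{f(n+h)}}{n}\gg_H \log x$ for some $h\le H$. Tao's logarithmic Elliott theorem (not Hal\'asz) is then the correct ``converse'' tool: it says that such a correlation \emph{forces} pretension. That pretension is then stabilised across scales with Klurman's lemma.

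\textbf{Stage 2.} Your plan to decompose $S_f$ by the $p$-adic valuation of $n$ and argue that a single deviating value $f(p^k)$ ``prevents cancellation'' is a heuristic that does not close. For a fixed large prime $p$, the term with $j=k$ contributes at most $O(x/p^k)$, which is tiny and easily absorbed by the other terms; nothing forces a contradiction. Moreover ``$p$ varies'' is not compatible with a fixed $x$, and you do not explain how deviations at infinitely many primes would be made to interfere constructively. The paper's rotation trick does exactly this missing work: a Tur\'an--Kubilius concentration inequality (Lemma 3.1) converts the averaged pretension into a pointwise approximation for $f$ on arithmetic progressions; a CRT construction (Lemma 3.2) arranges for products $k_j$ of specified bad prime powers to exactly divide $Wn+j$; and one then deliberately chooses the prime powers making up each $k_j$ to rotate the phases so that the short sum $\sum_{1\le j\le H}f(Wn+j)e(j\alpha)$ has magnitude $\gg H$, contradicting boundedness once $H$ is large. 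This engineered constructive interference across many bad primes is the heart of the contradiction and is absent from your sketch. As written, your second stage would not rule out, for example, a function with a single prime $p$ where $f(p^2)\neq 1$, which certainly does have bounded $S_f$.
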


We have the following special cases of \thref{stronger}, when $f$ is a \emph{completely} multiplicative function, takes finitely many values, or both ---- in these cases, we can say something stronger than \thref{stronger}.
\begin{thm}\thlabel{special}
    Let $f: \N \ra \U \cup \{0\}$ be a multiplicative function, and assume that 
    \[\sum_{p:f(p)=0}\frac{1}{p} < \infty.\]If, for some $\al \in \R \smallsetminus \Q$,
    \[\sup\limits_{x}\abs{S_f(x,\al)} < \infty,\]
    then there is some Dirichlet character $\chi$ and a real number $t \in \R$ such that:
\begin{enumerate}[label=\roman*)]
    \item If $f$ is \emph{completely} multiplicative, $f(p) = \chi(p)p^{it}$, for all $p$. 

\item If $f$ takes only finitely many values, $f(p^k) = f(p^{k-1})\chi(p)$ for all but finitely many prime powers $p^k$.
        \item If $f$ is both completely multiplicative and takes finitely many values, $f(p) = \chi(p)$ for all $p$.
    \end{enumerate}

\end{thm}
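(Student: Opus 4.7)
The strategy is to refine the conclusion of \thref{stronger}, which provides a Dirichlet character $\chi$ and a real number $t \in \R$ with
\[\lim_{p \to \infty}\max_k \abs{f(p^k) - \chi(p^k)p^{ikt}} = 0,\]
into the stronger equalities stated in (i)--(iii). A common preliminary step handles primes in $T$: if $p \in T$ is large and $p$ does not divide the conductor $q$ of $\chi$, then any $k$ with $f(p^k)=0$ gives $\abs{\chi(p^k)p^{ikt}}=1$, contradicting the displayed limit. Thus $T \setminus \{p : p \mid q\}$ is finite, so outside a finite exceptional set every prime lies outside $T$ and does not divide $q$.

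For (i), the plan is to use the elementary inequality that whenever $\lambda, \mu \in \U$ are distinct, $\sup_{k \geq 1} \abs{\lambda^k - \mu^k} \geq \sqrt 2$; indeed, writing $\lambda/\mu = e^{i\theta}$ with $\theta \in (0,\pi]$, one may take $k=1$ if $\theta \geq \pi/2$ and $k = \ceil{\pi/\theta}$ otherwise, so that $k\theta \bmod 2\pi$ lies in $[\pi/2, 3\pi/2]$. For a prime $p \notin T$ with $p \nmid q$, complete multiplicativity gives $f(p^k) = f(p)^k$ and $\chi(p^k)p^{ikt} = (\chi(p)p^{it})^k$, and the inequality above forces $f(p) = \chi(p) p^{it}$ for all sufficiently large such $p$.

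For (ii), the first step is to reduce to $t = 0$. Let $S$ denote the (finite) image of $f$ and $d$ the order of $\chi$. Taking $k = d$ in the conclusion of \thref{stronger} and using $\chi(p^d) = 1$ for $p \nmid q$, the point $p^{idt}$ lies within $o(1)$ of the finite set $S$ as $p \to \infty$ through admissible primes. However, the prime number theorem yields $\log p_{n+1} - \log p_n \to 0$, so for $t \neq 0$ the sequence $(dt \log p_n \bmod 2\pi)$ has step size tending to zero while wrapping around the circle infinitely often, hence is dense in $\T$; in particular it stays bounded away from any finite target set infinitely often, a contradiction. Once $t=0$ is established, $f(p^k)$ and $\chi(p^k)$ both lie in the finite set $S \cup \mu_d$ (where $\mu_d$ denotes the $d$-th roots of unity), and taking $p$ large enough that the maximum is smaller than the minimum separation between distinct elements of this set yields $f(p^k) = \chi(p^k)$ for every $k$.

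Part (iii) combines (i) and (ii): (i) supplies $f(p) = \chi_1(p)p^{it_1}$ for large $p$, while (ii) supplies $f(p) = \chi_2(p)$, so $p^{it_1} = \chi_2(p)/\chi_1(p)$ lies in a finite subset of $\U$ for cofinitely many primes; the density argument above again forces $t_1 = 0$, after which $\chi_1 = \chi_2$ on all but finitely many primes. The step I expect to be the main obstacle is the density argument ruling out $t \neq 0$ in (ii); the remainder is essentially discrete/algebraic manipulation once the asymptotic of \thref{stronger} is granted.
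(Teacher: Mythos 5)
Your proof is correct and follows the same overall strategy as the paper's: take the pretentious conclusion of \thref{stronger} and upgrade the limit to an eventual equality by exhibiting a positive lower bound on $\max_k\abs{f(p^k)-\chi(p^k)p^{ikt}}$ at any prime where equality fails. For part (i) you use the explicit inequality $\sup_k\abs{\lambda^k-\mu^k}\geq\sqrt2$ for distinct $\lambda,\mu\in\U$, where the paper argues case-by-case on rationality of the argument and settles for the constant $1/3$; the idea is identical. The genuine departure is in part (ii): you prove the key reduction $t=0$ directly, via the prime number theorem and the observation that $\{dt\log p_n\bmod 2\pi\}$ has vanishing gaps while wrapping the circle infinitely often, so it cannot eventually remain $\varepsilon$-close to the finite set of arguments of the image of $f$. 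The paper instead outsources this step, citing Lemma 5.1 of \cite{klurman2018rigidity}. Your version is self-contained at the modest cost of an extra paragraph; both are fine. You are also more explicit than the paper about the vanishing primes, observing that the conclusion of \thref{stronger} already forces $T\setminus\{p:p\mid q\}$ to be \emph{finite} (since $f(p^k)=0$ with $p\nmid q$ gives $\abs{f(p^k)-\chi(p^k)p^{ikt}}=1$), which streamlines the remaining cases. Your treatment of (iii) is slightly more elaborate than necessary — since the statement asserts a single $(\chi,t)$ across all three parts, (iii) is immediate from (ii) — but the extra reconciliation of two potentially different characters does no harm and would be needed if one read the parts with independent witnesses.
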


Our final main result shows that, if we include the stronger assumption that the exponential sum is bounded for a positive proportion of $\al$, then we can say more than \thref{stronger}. We have the following result for general multiplicative functions.
\begin{thm}\thlabel{bazinmain}
    Let $f: \N \ra \U \cup \{0\}$ be a multiplicative function, and assume that
    \[\sum_{p:f(p) = 0}\frac{1}{p} < \infty.\]
   If, for a positive measure set of $\al \in \R$,
    \[\sup\limits_{x}\abs{S_f(x,\al)} < \infty,\]
    then there is a Dirichlet character $\chi$ and a real number $t \in \R$ such that
    \[\sum_{p} \sum_k \abs{f(p^k)- f(p^{k-1})\chi(p)p^{it}} < \infty.\]
\end{thm}

\subsection{Discussion of results}
We now make a number of remarks on our main results, the first of which is the following special case of \thref{stronger}.
\begin{corollary}\thlabel{aymone}
Let $g: \N \ra \U$ be multiplicative, and let $\mu_k$ be the $k$-free indicator function, defined as
\[\mu_k\bra{p^j} = \begin{cases}
    1 & \text{if } j < k,\\
    0 & \text{if } j \geq k.
\end{cases}\]
Then, for any $\al \in \R$,
\[\sup\limits_{x}\abs{\sum\limits_{n \leq x}g(n)\mu_k(n)\e(n\al)} = \infty.\]

\end{corollary}
\begin{remark}
Recent work of Aymone \cite{Aymone} proved \thref{aymone} in the case when $\al = 0$, $g: \N \ra \{-1,+1\}$ is completely multiplicative and $k=2,3$. He remarks that proving the result for all $k$ is possible with the methods in his paper, but that extending this to multiplicative functions is the subject of future research. Our result allows us to deal with an even wider class of functions, as well as showing that the result holds even if an exponential twist is included in the sum.
\end{remark}

\begin{remark}
    If we remove the assumption that $\al$ is irrational in \thref{special}, we can still say something stronger than \thref{stronger} when $f$ is completely multiplicative, takes finitely many values, or both. In particular, it follows from the rotation trick used to prove \thref{stronger} that if $f$ is completely multiplicative, $f(p) = \chi(p)p^{it}$ for all but finitely many primes $p$, and if $f$ takes finitely many values, then for all but finitely many primes $p$, we must have that $f(p^k)=\chi(p^k)$ for all $k \geq 1$.
\end{remark}

\begin{remark} 
    It is easy to show (see \thref{character2} below) that if $f(n) = \chi(n) n^{it}$ for some $\chi$ and $t,$ then $S_f(x,\al)$ is bounded for all irrational $\alpha.$ \thref{special} i) then shows this is in fact the only case where the exponential sum can be bounded for some irrational $\alpha$ when $f$ is completely multiplicative, thus giving a necessary and sufficient condition for boundedness in this case.
\end{remark}

We also mention the following conjecture, communicated to the second author by C{\'e}dric Pilatte, based on a recent paper of Fregoli \cite{fregoli2021note}. We note that \thref{special} ii) proves this conjecture in the special case where $f$ is multiplicative.
\begin{conjecture}\thlabel{conjecture}
For a function $f: \N \ra \{0,1\}$ and $E \sub \T$ with positive measure, the exponential sum $S_f(x,\al)$ is bounded uniformly for all $x > 0$ and $\al \in E$ if and only if $f$ is eventually periodic.
\end{conjecture}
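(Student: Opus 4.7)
The plan is to handle the easy direction directly and then to sketch a strategy for the hard direction, where I expect the real difficulty to lie.

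For the easy direction, if $f$ is eventually periodic with period $q$, say $f(n) = h(n \bmod q)$ for $n \geq N_0$ and some $h: \Z/q\Z \ra \{0,1\}$, then splitting the sum by residues gives
\[ \sum_{n \leq x} f(n) e(n\alpha) = \sum_{a \bmod q} h(a) \sum_{\substack{n \leq x \\ n \equiv a \,(\text{mod } q)}} e(n\alpha) + O(1), \]
and each inner sum is a geometric progression with ratio $e(q\alpha)$, hence bounded by $2/\abs{1-e(q\alpha)}$. So the sum is uniformly bounded on any $E \sub \T$ bounded away from $\frac{1}{q}\Z$, and such $E$ can have measure arbitrarily close to $1$.

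For the hard direction, I would first reduce, via the Lebesgue density theorem, to a small interval $I$ around a density point $\alpha_0 \in E$ with $m(E \cap I)/m(I) > 1 - \ep$. On $E \cap I$ the trigonometric polynomial $S_N(\alpha) = \sum_{n \leq N} f(n) e(n\alpha)$ of degree $N$ is uniformly bounded, and the goal is to convert this pointwise-on-a-big-subset control into concrete structural information on the set $A = f^{-1}(1)$. A natural strategy is to exploit the identity $f^k = f$: for each $k$, $\abs{S_N(\alpha)}^{2k}$ is a nonnegative trigonometric polynomial whose coefficients count additive representations of integers by elements of $A \cap [1,N]$, and uniform boundedness on $E$ of the high powers of $|S_N|$ — integrated against suitable test functions supported near $\alpha_0$ — would impose strong constraints on these representation counts. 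Coupled with additive-combinatorial stability results in the spirit of Pl\"unnecke--Ruzsa, one could hope to conclude that $A$ must, outside a finite deviation, equal a union of full residue classes modulo some $q$.

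The main obstacle is closing the gap from an $L^\infty$ bound on a measure-theoretic majority of $I$ to any useful uniform control on all of $I$: Remez- or Bernstein-type inequalities for $S_N$ lose powers of $N$ and become vacuous as $N \to \infty$. Any successful approach must therefore exploit the specific $\{0,1\}$-valued structure rather than rely solely on polynomial-extrapolation results. Since even the completely multiplicative case relies on the pretentiousness machinery behind \thref{igormain}, I expect the general conjecture to require substantially new input, likely from additive combinatorics or ergodic theory (for example, a structure theorem for sequences whose exponential sums stay bounded on a large set of frequencies), rather than from Fourier-analytic methods alone.
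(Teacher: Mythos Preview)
This statement is listed in the paper as a \emph{conjecture}, not a theorem: the paper does not contain a proof of it, and indeed presents \thref{igormain} merely as ``additional evidence'' for it in the completely multiplicative special case. So there is no ``paper's own proof'' to compare your proposal against.

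Your treatment of the easy direction is correct and matches the computation behind \thref{character}: if $f$ is eventually periodic with period $q$, then for any $E$ bounded away from $\tfrac{1}{q}\Z$ the geometric-series bound gives a uniform $O_E(1)$ estimate, and such $E$ can be chosen of positive (indeed arbitrarily close to full) measure.

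For the hard direction you have not given a proof, and you say so yourself. That is the honest assessment: the conjecture is open in the generality stated. Your proposed strategy --- density-point localisation, moments of $|S_N|$, and Pl\"unnecke--Ruzsa-type stability --- does not close the gap, and the obstacle you identify (Remez/Bernstein losing a factor of $N$) is exactly why naive polynomial extrapolation fails. The paper's machinery (Tao's two-point Elliott, the rotation trick, the modified-character analysis of Section~\ref{4}) all hinges essentially on multiplicativity and gives no traction on an arbitrary $\{0,1\}$-valued $f$, so your expectation that genuinely new input is required is well founded. In short: your easy direction is fine; your hard direction is a reasonable discussion of why the problem is hard, but it is not a proof, and none is expected here since the paper itself leaves the statement as a conjecture.
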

\begin{remark}
    For discussion of the optimality of our results, in particular \thref{stronger,bazinmain}, see Section \ref{5}, where we show that there exist multiplicative functions which take values on the unit circle and have bounded exponential sums, but $\left\{p:f(p) \neq \chi(p)p^{it}\right\}$ is infinite for any character $\chi$ and $t \in \R$.
\end{remark}

\begin{remark}\thlabel{remark1.6}
   We note that the condition
   \[\sum\limits_{p:f(p) = 0}\frac{1}{p} < \infty\]
   in \thref{stronger,special,bazinmain} is equivalent to the condition
    \[\sum\limits_{p^k : f\bra{p^k} = 0}\frac{1}{p^k} < \infty,\]
    since $\sum_{p}\sum_{k \geq 2}\frac{1}{p^k} < \infty$, and simply ensures that a positive proportion of the values of $f$ are non-zero. This is a fairly natural condition in this context, and appears e.g. in the refinement of Chudakov's conjecture (Corollary 1.5b) in \cite{klurman2021multiplicative}.
\end{remark}

\subsection{Overview of the paper}
\begin{itemize}
    \item In Section \ref{pretentious}, we show that $S_f(x,\al)$ being bounded implies that $f$ is of a relatively special form, namely that $f$ is `pretentious'. Roughly speaking, this means that $f$ `pretends to be' a Dirichlet character, i.e, a periodic function. Tao's logarithmically averaged Elliott result from \cite{tao2016logarithmically} will play a crucial role in this section, in addition to the language of `pretentious number theory' of Granville-Soundararajan.
    \item In Section \ref{3}, we prove \thref{stronger}, by combining the reduction from the previous section with sieve-theoretic, combinatorial, and analytic ideas based on the `rotation trick' introduced by Klurman, Mangerel, Pohoata and Ter{\"a}v{\"a}inen in \cite{klurman2021multiplicative} to study similar questions about partial sums of completely multiplicative functions. In particular, we develop a version of the rotation trick which allows us to extend their methods to handle general multiplicative functions, rather than just completely multiplicative functions, and to deal with $f$ having zeroes.
    \item In Section \ref{4}, we prove \thref{special,bazinmain}, by combining the reduction from Section \ref{pretentious} with the large-sieve based lower bound on exponential sums obtained by the first author in \cite{bazin2025exponentialsums}. We note that the Duffin--Schaeffer--Koukoulopoulos--Maynard theorem \cite{duffin1941khintchine, koukoulopoulos2020duffinschaeffer} plays a key role in the proof of \thref{bazinmain} in order to find suitable rational approximations of $\alpha$ for almost all $\alpha.$ 
    \item In Section \ref{5}, we discuss the sharpness of our main results and whether it would be possible to strengthen them. In particular, we prove results showing that a statement of the strength of \thref{special} does not hold in the general case, so \thref{stronger,bazinmain} are in some sense close to optimal.
\end{itemize}

\section{Reduction to pretentious functions} \label{pretentious}

In this section, we show that being close to periodic is, in some sense, the only way $S_f(x,\al)$ can be bounded. We will show that, if $S_f(x,\al)$ is bounded in $x$ for some $\al$, then $f$ must `look like' a Dirichlet character. We will use the language of \emph{pretentious functions}, as developed by Granville-Soundararajan \cite{granville2014multiplicative}, to make precise what we mean later in this section.
\subsection{Dirichlet characters}
We begin this section by showing that the exponential partial sums of a `twisted' Dirichlet character are bounded, unless $\al$ is a rational number whose denominator is a divisor of the period of the Dirichlet character. Before we prove this, we need an additional lemma.
\begin{lem}\thlabel{character2_sums}
    Let $z \in \U$, $z\neq 1$ and let $t\in\R$. Then for $m \ge 1,$
    \begin{align*}
        \sum_{n=1}^m z^n n^{it}&=O_t\bra{\frac{1}{\abs{1-z}}}.
    \end{align*}
\end{lem}
\begin{proof}
We first show that for integers $m \ge A \ge 1,$ 
\begin{equation}\label{step1}
    \sum_{n=A}^m \frac{z^n n^{it}}{n}=O_t\bra{\frac{1}{A\abs{1-z}}}.\tag{*}
\end{equation} 
We have
\begin{align*}
    \abs{(1-z)\sum_{n=A}^m \frac{z^n n^{it}}{n}} &= \abs{-z^{m+1}m^{it-1}+z^{A}A^{it-1}+\sum_{n=A+1}^m z^n (n^{it-1}-(n-1)^{it-1})}
    \\&\le \frac{1}{m} + \frac{1}{A} + \sum_{n = A+1}^m O_t\bra{\frac{1}{n^2}} = O_t\bra{\frac{1}{A}},
\end{align*}
as desired after dividing by $1 - z \ne 0.$
\smallbreak
Now we show that $\sum_{n=1}^m z^n n^{it}=O_t\bra{\frac{1}{\abs{1-z}}}.$ For $m\ge A\ge 1,$ we have
\begin{align*}
    (1-z)\sum_{n=A}^m z^n n^{it} &= -z^{m+1}m^{it}+z^A A^{it} +\sum_{n=A+1}^m z^n (n^{it}-(n-1)^{it})
    \\&= O(1) + \sum_{n=A+1}^m z^n \bra{it n^{it-1} + O_t(n^{-2})}
    \\&= O_t(1) + it \sum_{n=A+1}^m z^n n^{it-1}.
\end{align*}
By \eqref{step1},we then have 
\begin{align*}
    \sum_{n=1}^m z^n n^{it} &= \sum_{n=1}^{A-1} z^n n^{it} + \frac{1}{1-z}\bra{O_t(1) + it \sum_{n=A+1}^m z^n n^{it-1}}
    \\&= O(A) + O_t\bra{\frac{1}{|1-z|}} + O_t\bra{\frac{1}{A\abs{1-z}^2}},
\end{align*}
hence the conclusion picking $A = \left\lceil\frac{1}{|1-z|}\right\rceil.$
\end{proof}

Now we prove the aforementioned result on twisted characters having bounded exponential sums.
\begin{prop}\thlabel{character2}
    Let $\chi$ be a Dirichlet character of modulus $m$, $t\in\R$ and let $\al \in \R$ be such that $m\al \notin \Z$. Then
    \[\sum\limits_{n \leq x}\chi(n)\e(n\al)n^{it} = O_{m, t}\left(\frac{1}{\e(\alpha m) - 1}\right).\]
\end{prop}
\begin{proof}[Proof of \thref{character2}]
As $\chi$ is $m$-periodic, we can write $$\chi(n) = \sum_{r \bmod m} a_r \e\bra{n\frac{r}{m}},$$ where $a_r := \frac{1}{m} \sum_{n \bmod m} \chi(n) \e\bra{-n\frac{r}{m}} = O_m(1).$ We then have 
\begin{align*}
    \sum\limits_{n \leq x}\chi(n)\e(n\al)n^{it} &=
    \sum_{r\bmod m} a_r \sum\limits_{n \leq x}\e\bra{n\bra{\al + \frac{r}{m}}} n^{it} 
    \\&= \sum_{r\bmod m} a_r O_t\bra{\frac{1}{\abs{\e\bra{\al + \frac{r}{m}} - 1}}}
    \\&= O_{m, t}\left(\frac{1}{\e(\alpha m) - 1}\right),
\end{align*}
by \thref{character2_sums} and noting that $\abs{\e\bra{\al + \frac{r}{m}} - 1} \ge \frac{1}{m} \abs{\e\bra{\al m} - 1}$ for all $r.$

\end{proof}

\subsection{Pretentious functions}
\begin{defn}
Following Granville and Soundararajan \cite{granville2014multiplicative}, we define the \emph{pretentious distance} between two multiplicative functions $f$ and $g$ as
\[\D^2(f,g;y,x) = \sum\limits_{y < p \leq x} \frac{1 - \mathrm{Re}\bra{f(p)\conj{g(p)}}}{p}.\]
We write $\D(f,g;x) = \D(f,g;1,x)$, and say that $f$ `pretends to be like' $g$, or $f$ is \emph{$g$-pretentious} if $\D(f,g;\infty) < \infty$.
\end{defn}
\begin{remark}
Perhaps the most important property of the pretentious distance is that it satisfies the triangle inequality, that is, $\D\bra{f,g;x} + \D\bra{g,h;x} \geq \D\bra{f,h;x}$.
\end{remark}

We will use Tao's two-point correlation result from \cite{tao2016logarithmically}, which shows that a multiplicative function with high self-correlation must resemble $\chi(n)n^{it}$ for some Dirichlet character $\chi$ and $t \in \R$. For our purposes, the following (slightly simplified) version of Theorem 1.3 from \cite{tao2016logarithmically} is sufficient.

\begin{thm}[Tao]\thlabel{Tao}
Let $\ep > 0$, $h \in \N$ and let $A\bra{\ep,h}$ be large. Let $f: \N \ra \C$ be a multiplicative function with $\abs{f(n)} \leq 1$ for all $n$ such that $\D\bra{f,\chi(n)n^{it};x} \geq A$ for all Dirichlet characters $\chi$ with period at most $A$, and all $t \in \R$ with $\abs{t} \leq Ax$. Then
\[\abs{\sum\limits_{n \leq x}\frac{f(n) \conj{f\bra{n+h}}}{n}} \leq \ep \log x.\]
\end{thm}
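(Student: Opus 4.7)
The statement is Tao's logarithmically averaged Elliott-type theorem from \cite{tao2016logarithmically}, and I would follow his approach based on the \emph{entropy decrement method}. The strategy is contrapositive: assume the correlation $\sum_{n \leq x}f(n)\conj{f(n+h)}/n$ exceeds $\ep \log x$ in absolute value, and deduce that $f$ must pretend to be $\chi(n)n^{it}$ for some Dirichlet character $\chi$ of modulus at most $A$ and some $\abs{t} \leq Ax$, contradicting the non-pretentiousness hypothesis $\D\bra{f,\chi(n)n^{it};x} \geq A$.

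The first step exploits multiplicativity. For a prime $p$ coprime to $n(n+h)$, we have $f(pn)\conj{f(p(n+h))} = \abs{f(p)}^2 f(n)\conj{f(n+h)}$, so the change of variables $n \mapsto pn$ in the logarithmic average transfers the correlation hypothesis to a similar lower bound for $f(pn)\conj{f(pn+ph)}$ summed over $n \leq x/p$, up to a negligible $O(1/p)$ loss from the non-coprime terms. Iterating this across many primes produces rigidity constraints on the joint distribution of $(f(n), f(n+h))$ sampled logarithmically, expressing an approximate dilation invariance.

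The heart of the argument is the entropy decrement. View the tuple $(f(n),f(n+h),\ldots,f(n+Kh))$, suitably discretised, as a random variable $X$ under the logarithmic measure on $[1,x]$. The Shannon entropy $H(X)$ is bounded, so a pigeonhole argument over primes $p$ in a slowly growing window (say $p \in [P, P\log\log P]$) produces a large set of primes for which the conditional entropy loss under the map $n \mapsto pn$ is negligible. This yields an approximate invariance of the joint distribution under $p$-dilations for many $p$, and combining this invariance with multiplicativity pins down the behaviour of $f$ at primes. One then invokes the Matom\"aki-Radziwi{\l}{\l} theorem to reduce the long logarithmic average to averages of $f(n)\conj{f(n+h)}$ on short intervals, on which H\'alasz's theorem produces cancellation unless $f$ pretends to be some $\chi(n)n^{it}$ of controlled modulus and parameter, contradicting the assumption on $\D\bra{f,\chi(n)n^{it};x}$.

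The hard part is unquestionably the entropy decrement: extracting a usable pigeonhole from a bounded entropy budget, and tuning the prime window sharply enough that the error from the non-coprime terms and from discretising $f$ remains strictly smaller than the entropy gap, is the delicate technical core. Once this rigidity has been established, the remaining ingredients (Matom\"aki-Radziwi{\l}{\l} on short intervals, H\'alasz's theorem, and the triangle inequality for the pretentious distance) can essentially be invoked as black boxes.
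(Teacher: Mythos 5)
The paper does not prove this statement at all; it is imported as a black box from Tao's paper \cite{tao2016logarithmically}, flagged explicitly as ``a slightly simplified version of Theorem 1.3'' there. There is therefore no internal proof of this theorem to compare against. Your sketch faithfully reconstructs the structure of Tao's own argument: the contrapositive set-up, the dilation identity $f(pm)\conj{f(pm+ph)} = \abs{f(p)}^2 f(m)\conj{f(m+h)}$ for $p$ coprime to $m(m+h)$ relating shift $h$ to shift $ph$ on multiples of $p$, the entropy decrement pigeonhole over a slowly growing prime window to extract approximate independence of the discretised tuple from $n \bmod p$, and then Matom\"aki--Radziwi{\l}{\l} plus Hal\'asz at the final stage to force $f$ to pretend to be $\chi(n)n^{it}$ of bounded conductor and $\abs{t} \leq Ax$. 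As a blind outline this is accurate, with the obvious caveat that the entropy decrement step you correctly single out as the delicate core is stated only at the level of a plan, not a proof; the paper, reasonably, chooses not to reprove it either.
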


We also need the following technical result, which is Lemma 4.2 in \cite{klurman2017correlations}. This will allow us to iterate \thref{Tao} on a suitable set of scales.

\begin{lem}[Klurman]\thlabel{Klurman}
Let $x_1, x_2, \ldots$ be an infinite sequence such that $x_n < x_{n+1} \leq x_n^a$ for all $n$, for some $a > 1$. Assume that for each $x_m$ there exists a primitive Dirichlet character $\chi_m$ and a real number $t_m$ such that $\D\bra{f, \chi_m(n)n^{it_m};x_m} = O(1)$, where $\chi_m$ has conductor $O(1)$ and $\abs{t_m} \ll x_m$. Then there exists a primitive Dirichlet character $\chi$ and a real number $t \in \R$ such that $\D\bra{f, \chi(n)n^{it};\infty} < \infty$.
\end{lem}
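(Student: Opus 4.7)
My plan is three steps: a pigeonhole on the characters $\chi_m$, extracting the limit $t$, and verifying the uniform bound using the growth condition on $(x_n)$.

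\emph{Step 1: the character stabilizes.} I would first show that $\chi_m = \chi$ for all $m$ sufficiently large. For any two distinct primitive Dirichlet characters $\chi, \chi'$, the product $\chi\conj{\chi'}$ is non-principal, so by the non-vanishing of $L(1+i\tau, \chi\conj{\chi'})$ the partial sums $\sum_{p \leq y} \chi\conj{\chi'}(p) p^{i\tau}/p$ are bounded uniformly in $y$; combined with Mertens' theorem, this forces $\D(\chi n^{is}, \chi' n^{it}; y) \to \infty$ as $y \to \infty$. If two distinct characters $\chi, \chi'$ each occurred as $\chi_m$ infinitely often, the triangle inequality applied to indices $m_k < n_k$ with $\chi_{m_k} = \chi$ and $\chi_{n_k} = \chi'$ would give $\D(\chi n^{it_{m_k}}, \chi' n^{it_{n_k}}; x_{m_k}) = O(1)$, contradicting the divergence above as $x_{m_k} \to \infty$. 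So $\chi_m$ is eventually a constant primitive character $\chi$.

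\emph{Step 2: the $t_m$ converge.} For $m_0 \leq m < n$, monotonicity of $\D(\cdot,\cdot;y)$ in $y$ and the triangle inequality give
\[
\D(n^{i(t_n - t_m)}, 1; x_m) \leq \D(f, \chi n^{it_m}; x_m) + \D(f, \chi n^{it_n}; x_m) = O(1).
\]
A standard Granville--Soundararajan estimate (essentially $\D(n^{i\tau}, 1; y)^2 = \log(|\tau|\log y) + O(1)$ for $1/\log y \lesssim |\tau| \lesssim 1$, and $\log\log y + O(1)$ for $|\tau|$ larger) then forces $|\tau| = O(1/\log y)$ whenever $\D(n^{i\tau}, 1; y) = O(1)$. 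Applied to $\tau = t_n - t_m$ with $y = x_m$, this yields $|t_n - t_m| = O(1/\log x_m)$ for all $n > m \geq m_0$; hence $(t_m)$ is Cauchy in $\R$, converges to some $t$, and $|t - t_m| = O(1/\log x_m)$.

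\emph{Step 3: uniform bound using the growth condition.} For $x \geq x_{m_0}$, choose the unique $m$ with $x_m \leq x < x_{m+1}$. By triangle inequality and Step 2,
\[
\D(f, \chi n^{it}; x_m) \leq \D(f, \chi n^{it_m}; x_m) + \D(n^{it_m}, n^{it}; x_m) = O(1),
\]
since the second term is $O(|t - t_m|\log x_m) = O(1)$. The contribution from primes in $(x_m, x]$ is
\[
\sum_{x_m < p \leq x} \frac{1 - \mathrm{Re}(f(p)\conj{\chi(p)} p^{-it})}{p} \leq 2(\log\log x_{m+1} - \log\log x_m) + O(1) \leq 2\log a + O(1),
\]
by Mertens and $x_{m+1} \leq x_m^a$. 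Combining these uniformly in $m$, $\D(f, \chi n^{it}; x) = O(1)$ for all $x$, proving the claim.

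The main obstacle is the precise quantitative form of the Granville--Soundararajan estimate invoked in Step 2; the growth condition $x_{m+1} \leq x_m^a$ plays no role until Step 3, where it is exactly what is needed to prevent the primes between consecutive $x_m$ from contributing an unbounded amount to the pretentious distance.
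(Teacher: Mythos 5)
The paper does not prove this lemma; it is quoted verbatim from Lemma~4.2 of Klurman's paper \cite{klurman2017correlations} with only a citation, so there is no internal proof to compare against. I will therefore assess your proposal on its own merits.

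Your three-step skeleton --- pigeonhole on the finitely many admissible characters, show the $t_m$ converge, then use $x_{m+1}\le x_m^a$ to bound the new primes between consecutive scales --- is exactly the right architecture, and Step~3 is fine as written. The difficulty is the quantitative input underlying Steps~1 and~2. You invoke the formula ``$\D(n^{i\tau},1;y)^2 = \log(|\tau|\log y)+O(1)$ for $1/\log y \lesssim |\tau| \lesssim 1$, and $\log\log y + O(1)$ for $|\tau|$ larger'' and deduce that $\D(n^{i\tau},1;y)=O(1)$ forces $|\tau|=O(1/\log y)$. The second clause of this formula is incorrect once $|\tau|$ is allowed to grow with $y$. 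The sharp expression is $\D^2(1,n^{i\tau};y)=\log\log y-\log\abs{\zeta(1+1/\log y+i\tau)}+O(1)$, and the $\zeta$-term can be as large as $\tfrac{2}{3}\log\log\abs{\tau}+o(\log\log\abs{\tau})$ unconditionally (Vinogradov--Korobov); with only the classical bound $\zeta(1+it)\ll\log\abs{t}$ one gets $\log\log\abs{\tau}+O(1)$, which yields no lower bound on $\D^2$ at all when $\abs{\tau}$ is a power of $y$. Since the hypothesis only gives $\abs{t_m}\ll x_m$, the differences $\tau=t_n-t_m$ you compare at scale $x_m$ can genuinely be of size $\asymp x_n$, which for $n\gg m$ is far beyond any power of $x_m$ and outside the range where even Vinogradov--Korobov rescues the bound. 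So the direct application of the estimate to arbitrary pairs $m<n$ is not justified. The repair requires first comparing only consecutive indices, where $\abs{t_{m+1}-t_m}\ll x_m^{a}$ keeps you within range and a $\zeta(1+it)=o(\log\abs{t})$ input gives $\abs{t_{m+1}-t_m}\ll 1/\log x_m$, and then running an induction on $n$ to keep $\abs{t_n-t_m}$ below a harmless threshold while passing to non-consecutive pairs (a single telescoping appeal does not suffice: $\sum_m 1/\log x_m$ need not converge, since $\log x_m\asymp m$ is compatible with $x_{m+1}\le x_m^a$). Step~1 has the same issue in milder form: your claim that $\sum_{p\le y}\chi\conj{\chi'}(p)p^{i\tau}/p$ is bounded uniformly in $y$ also needs uniformity in $\tau$, which is exactly the same missing piece. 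In summary, the architecture is right but the key quantitative step is misstated, and the repair is delicate, requiring a Vinogradov--Korobov-type bound for $\zeta$ and $L$-functions on the $1$-line plus more careful bookkeeping of which pairs of scales are compared.
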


Combining \thref{Tao,Klurman}, we have the following corollary, which we will use in our reduction.
\begin{corollary}
\thlabel{Elliott}
Let $f: \N \ra \C$ be a multiplicative function with $\abs{f(n)} \leq 1$ for all $n$, such that there exists some $H \ge 1$ such that for $x \ge 2,$
\[\max_{1\le h\le H} \abs{\sum\limits_{n \leq x}\frac{f(n)\conj{f(n+h)}}{n}} \gg \log x.\]
Then there is a Dirichlet character $\chi$ and a real number $t$ such that $\D\bra{f,\chi(n)n^{it};\infty} < \infty$.
\end{corollary}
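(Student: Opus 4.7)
The plan is to combine the contrapositive of Tao's theorem (\thref{Tao}) with Klurman's iteration lemma (\thref{Klurman}). First, I would fix some $H$ large enough that the hypothesis applies, and let $h = h(H) \leq H$ be the corresponding value for which $\sum_{n \leq x}\frac{f(n)\overline{f(n+h)}}{n} \geq c_H \log x$ for all sufficiently large $x$, where $c_H > 0$ is the implicit constant in the $\gg_H$ notation. Set $\ep := c_H / 2$ and let $A = A(\ep, h)$ be the constant produced by \thref{Tao} for these parameters.

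For each sufficiently large $x$, the assumed lower bound strictly exceeds $\ep \log x$, and therefore contradicts the conclusion of \thref{Tao}, so its hypothesis must fail. This produces a Dirichlet character $\chi_x$ of conductor at most $A$, together with a real number $t_x$ with $|t_x| \leq A x$, such that $\D(f, \chi_x(n) n^{it_x}; x) < A$. Passing to the primitive character $\chi_x^*$ inducing $\chi_x$ only alters $\chi_x$ at the finitely many primes dividing its modulus, changing the pretentious distance by at most an absolute constant, so I may assume $\chi_x^*$ is primitive of conductor at most $A$ with the same $t_x$.

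I would then pick the dyadic sequence $x_m := 2^m$, which satisfies $x_m < x_{m+1} \leq x_m^2$ for all $m \geq 1$, so with $a = 2$ the hypotheses of \thref{Klurman} are met by the triples $(x_m, \chi_{x_m}^*, t_{x_m})$. Applying \thref{Klurman} yields a single primitive Dirichlet character $\chi$ and a real number $t$ with $\D(f, \chi(n) n^{it}; \infty) < \infty$, which is the desired conclusion. The only real obstacle is quantifier bookkeeping: one must ensure that $\ep$, and hence $A$, depends only on $H$ and $h$ and not on $x$, so that a single uniform threshold works across all scales in the sequence, and that the bound $|t_{x_m}| \leq A x_m$ is correctly absorbed into the condition $|t_m| \ll x_m$ required by \thref{Klurman}.
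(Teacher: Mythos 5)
Your proposal is correct and follows essentially the same route as the paper: invoke the contrapositive of \thref{Tao} at each sufficiently large scale $x$ to produce a character of bounded conductor and a parameter $t_x$ with $|t_x| \le Ax$ satisfying $\D(f,\chi_x(n)n^{it_x};x) \le A$, then feed a suitable sequence of scales into \thref{Klurman} to obtain a single $\chi$ and $t$. The extra bookkeeping you supply (fixing a dyadic sequence with $a=2$, and replacing $\chi_x$ by the primitive character inducing it at the cost of an $O(1)$ change in the pretentious distance) is slightly more explicit than what the paper writes, but it is the same argument.
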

\begin{proof}
    By the contrapositive of \thref{Tao}, for all $x\ge 2,$ if
    \[\abs{\sum\limits_{n \leq x}\frac{f(n)\conj{f(n+h)}}{n}} \ge  \ep \log x\]
     for some $1 \le h\le H$ (where $h$ may depend on $x$ but not $H$), then there is some $A(\ep,H) > 0$ such that, for sufficiently large $x$, there is a real number $t_x$ with $\abs{t_x} \leq Ax$ and a primitive character $\chi_x$ of modulus $\leq A$ such that $\D\bra{f,\chi_x(n)n^{it_x};x} \leq A$. Since $A$ does not depend on $x$, we can apply \thref{Klurman}, and the result follows immediately.
\end{proof}

\subsection{Performing the reduction}
We now apply \thref{Elliott} to our problem. The following lemma uses a `van der Corput' argument to show that any function with bounded partial sums, which is not too small, must exhibit some logarithmic correlation.

\begin{lemma}\thlabel{cor}
Let $f: \N \ra \C$, and assume $\sum_{n\le x} \frac{\abs{f(n)}^2}{n} \gg \log x.$ 
If $f$ has bounded partial sums, meaning
\[\sum\limits_{n \leq x} f(n) = O(1),\]
then there exists $H > 0$ such that for $x\ge 2$ we have
\[\max_{1\le h\le H} \abs{\frac{1}{\log x} \sum\limits_{n \leq x} \frac{f(n) \conj{f(n+h)} }{n}}\gg 1.\]
\end{lemma}
\begin{proof}
If $f$ has bounded partial sums, then certainly for any $H>0$ the sums 
$\sum\limits_{n \leq m \leq n+H} f(m)$ 
are bounded for any $n$. 
Therefore, there is some absolute constant $C$ such that
\[\abs{\sum\limits_{n \leq m \leq n+H} f(m)}^2 \leq C.\]
But if we expand out the sum, we have
\begin{align*}
    1 \gg \abs{\sum\limits_{n \leq m \leq n+H} f(m)}^2 &= \sum\limits_{0\le h_1, h_2 \leq H}f(n+h_1)\conj{f(n+h_2)}
    \\ &=\sum\limits_{0\le h_1 \leq H}\abs{f(n+h_1)}^2 + \sum\limits_{0\le h_1 \neq h_2 \leq H}f(n+h_1)\conj{f(n+h_2)}.
\end{align*}
Taking the logarithmic average, we then have for $x$ large enough in terms of $H,$
\begin{align*}
    1&\gg \frac{1}{\log x} \sum_{n\le x} \frac{1}{n} \abs{\sum\limits_{n \leq m \leq n+H} f(m)}^2 \\&=  \frac{1}{\log x} \sum_{n\le x} \frac{1}{n} \bra{\sum\limits_{0\le h_1 \leq H}\abs{f(n+h_1)}^2 + \sum\limits_{0\le h_1 \neq h_2 \leq H}f(n+h_1)\conj{f(n+h_2)}}
    \\&= \frac{1}{\log x} \sum_{n\le x+H} \abs{f(n)}^2 \sum_{\substack{0\le h\le H \\ 0\le n-h \le x}} \frac{1}{n-h} + \frac{1}{\log x} \sum_{1\le |h|\le H} \sum_{n\le x+H} f(n)\conj{f(n+h)} \sum_{\substack{0\le h_1,h_2\le H \\ h_2 - h_1 = h \\ 0\le n-h_1 \le x}} \frac{1}{n-h_1}
    \\&\asymp \frac{H+1}{\log x}  \sum_{n\le x} \frac{\abs{f(n)}^2}{n} + \sum_{1\le |h| \le H} \frac{H+1-|h|}{\log x} \sum_{n\le x} \frac{f(n) \conj{f(n+h)}}{n} 
    \\&\gg H + O\bra{H^2 \max_{1\le h\le H} \abs{\frac{1}{\log x} \sum_{n\le x} \frac{f(n) \conj{f(n+h)}}{n}}},
\end{align*}
where the implicit constants do not depend on $H$ (only on the implicit constants in the assumptions on $f$). Clearly, this is only possible if 
\[\max_{1\le h\le H} \abs{\frac{1}{\log x} \sum_{n\le x} \frac{f(n) \conj{f(n+h)}}{n}} \gg \frac{1}{H} + O\bra{\frac{1}{H^2}},\] which gives the desired conclusion after fixing some large enough $H$ (which depends on the implicit constants above but not on $x$).

\end{proof}

Using \thref{cor} and \thref{Elliott}, we can now reduce our problem to the class of functions which `pretend' to be a Dirichlet character.
\begin{prop} \thlabel{new1}Let $\al \in \R$, and let $f$ be a multiplicative function which satisfies the hypotheses of \thref{stronger}. If
\[\sup\limits_{x}\abs{S_f(x,\al)} < \infty,\]
then $\D(f, \chi(n) n^{it};\infty) < \infty$ for some Dirichlet character $\chi$ and $t \in \R$.
\end{prop}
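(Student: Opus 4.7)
The plan is to combine \thref{cor} with \thref{Elliott}, applied not to $f$ itself but to the auxiliary twisted function $g(n) := f(n)e(n\al)$. Although $g$ is not multiplicative, we have $\abs{g(n)} = \abs{f(n)}$, so $g$ vanishes on exactly the same set of integers as $f$ and takes values of modulus $1$ off this set. Hence the two size hypotheses of \thref{cor} (summability of the reciprocals of the prime powers at which $g$ vanishes, and a uniform lower bound $c = 1$ off this set) hold for $g$. Moreover, the assumption $\sum_{n \leq x} f(n) e(n\al) = O(1)$ says precisely that $g$ has bounded partial sums. An inspection of the proof of \thref{cor} shows that multiplicativity of the function is never used, so the lemma applies to $g$ as stated.

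Applying \thref{cor} to $g$ yields, for every sufficiently large $H$, some $h \leq H$ with
\[
\frac{1}{\log x}\abs{\sum_{n \leq x}\frac{g(n)\conj{g(n+h)}}{n}} \gg_H 1.
\]
The essential algebraic observation is that the additive twist cancels under the correlation: expanding,
\[
g(n)\conj{g(n+h)} = f(n)\conj{f(n+h)}\, e\bra{n\al-(n+h)\al} = e(-h\al)\, f(n)\conj{f(n+h)},
\]
and since $\abs{e(-h\al)} = 1$, the unimodular factor pulls out of the sum without affecting its magnitude. Consequently, for the same $h$,
\[
\frac{1}{\log x}\abs{\sum_{n \leq x}\frac{f(n)\conj{f(n+h)}}{n}} \gg_H 1.
\]

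This is precisely the hypothesis of \thref{Elliott} applied to the multiplicative function $f$ (which satisfies $\abs{f(n)} \leq 1$ since $f$ takes values in $\U \cup \{0\}$). Invoking \thref{Elliott} immediately produces a Dirichlet character $\chi$ and a real number $t \in \R$ with $\D(f, \chi(n) n^{it}; \infty) < \infty$, as required.

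The heavy lifting has already been absorbed into \thref{cor} (the van der Corput style second-moment argument) and Tao's logarithmically averaged Elliott result underlying \thref{Elliott}; once the correct auxiliary function $g$ is identified, there is no real obstacle, since the two-point correlation automatically kills the linear phase $e(n\al)$. The only small point to verify carefully is that the proof of \thref{cor} genuinely does not rely on multiplicativity, so that it is legitimate to feed in the non-multiplicative function $g$.
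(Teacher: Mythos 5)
Your proposal is correct and is essentially identical to the paper's own proof: apply \thref{cor} to $g(n)=f(n)e(n\al)$ (noting that \thref{cor} is stated for an arbitrary function $f:\N\to\C$, so no multiplicativity issue arises), observe that $g(n)\conj{g(n+h)}=e(-h\al)f(n)\conj{f(n+h)}$ so the phase is constant and drops out in magnitude, and then invoke \thref{Elliott} for the multiplicative function $f$.
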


\begin{proof}
    Since $f$ is multiplicative and takes values in $\U \cup \{0\},$ we have 
    \[\sum_{n\le x} \frac{\abs{f(n)\e(\al n)}^2}{n} = \sum_{\substack{n\le x \\ f(n)\ne 0}} \frac{1}{n} \gg \log x\] as $\sum_{p:f(p)\ne 0} \frac{1}{p} < \infty.$
    \smallbreak
    We can therefore apply \thref{cor} with $f(n)\e(n\alpha)$, so that for some $H > 0$, we have for $x\ge 2$
    \[\max_{1\le h\le H} \abs{\frac{1}{\log x}\sum\limits_{n \leq x}\frac{1}{n}f(n)\e(n\al) \conj{f(n+h)\e((n+h)\al)}} \gg 1.\]
    Since $\e\bra{n\al}\conj{\e\bra{\bra{n+h}\al}} = \e\bra{-h\al}$ has modulus one and does not depend on $n,$ this reduces to 
    \[\max_{1\le h\le H} \abs{\frac{1}{\log x}\sum\limits_{n \leq x}\frac{f(n)\conj{f(n+h)}}{n}} \gg 1.\]
    Applying \thref{Elliott}, we deduce that indeed we have $\D(f, \chi(n)n^{it};\infty) < \infty$ for some Dirichlet character $\chi$ and real number $t$.
\end{proof}

\section{The rotation trick}\label{3}
In this section, we strengthen the conclusion of \thref{new1} by proving \thref{stronger}. Throughout this section, we assume that $f$ satisfies the hypotheses of \thref{stronger}, namely that $f: \N \ra \C$ is multiplicative, for all $n \in \N$, $\abs{f(n)} \in \{0,1\}$, and $\sum_{p^k \in T}\frac{1}{p^k} < \infty$, where 
\[T =\{p^k:f\bra{p^k} = 0 \text{ for some } k\}.\] 
By \thref{new1}, if
\[\sup\limits_{x}\abs{S_f(x,\al)} < \infty\]
for some $\al \in \R$, then there is some Dirichlet character $\chi$ and $t \in \R$ such that 
\[\D\bra{f,\chi(n)n^{it};\infty} < \infty.\]
We will assume throughout this section that $f$ is $\chi(n)n^{it}$-pretentious.

\subsection{Lemmata for the rotation trick}

We will need the following concentration inequality for pretentious functions, based on the Tur\'an-Kubilius inequality, which appears as Lemma~2.5 in \cite{klurman2021multiplicative}. For brevity, we will use $F(Q)$ to denote the quantity
\[F(Q) = \sum\limits_{\substack{p \leq x \\ p \nmid Q}}\frac{f(p)\conj{\chi}(p)p^{-it}-1}{p}.\]

\begin{lemma}[Concentration inequality for pretentious functions]\thlabel{concentration}
    Let $f: \N \ra \C$ be a multiplicative function with $\abs{f(n)} \leq 1$ for all $n \in \N$, such that $\D\bra{f,\chi(n)n^{it}; \infty} < \infty$ for some Dirichlet character $\chi$ of conductor $q$ and $t \in \R$. Let $Q, N \geq 1$ be integers with $\prod_{p \leq N}p \mid Q$ and $q \mid Q$. Then for any $1 \leq a \leq Q$, with $\bra{a,Q} = 1$, we have
    \[\sum\limits_{n \leq x}\abs{f(Qn+a) - \chi(a)\bra{Qn}^{it} \exp\bra{F(Q)}} \ll x\bra{\D\bra{1, f(n)\conj{\chi}(n)n^{-it};N,x} + \frac{1}{\sqrt{N}}}.\]
\end{lemma}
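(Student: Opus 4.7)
The statement is a Tur\'an--Kubilius-style concentration inequality for pretentious functions in arithmetic progressions. My plan is to adapt the argument of Lemma~2.5 in \cite{klurman2021multiplicative}. Write $g(n):=f(n)\conj{\chi(n)}n^{-it}$, a multiplicative function with $|g(n)|\in\{0,1\}$ satisfying $\D(1,g;\infty)<\infty$ by hypothesis. Since $q\mid Q$ and $(a,Q)=1$, we have $\chi(Qn+a)=\chi(a)$, and also $(Qn+a)^{it}=(Qn)^{it}(1+O_t(1/n))$, so replacing $(Qn+a)^{it}$ by $(Qn)^{it}$ in $f(Qn+a)=\chi(a)(Qn+a)^{it}g(Qn+a)$ introduces an error of order $O_t(\log x)$, absorbable into the target bound. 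The crucial structural input is that because $\prod_{p\leq N}p\mid Q$ and $(a,Q)=1$, every integer of the form $Qn+a$ is coprime to $Q$ and to every prime $p\leq N$; consequently, in the factorisation $g(Qn+a)=\prod_{p^k\| Qn+a}g(p^k)$ only primes $p>N$ with $p\nmid Q$ appear.

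After this reduction it suffices to show
\[
\sum_{n\leq x}\bigl|g(Qn+a)-\exp F(Q)\bigr|\ll x\bigl(\D(1,g;N,x)+N^{-1/2}\bigr).
\]
I would proceed via Cauchy--Schwarz: it is enough to prove the second-moment bound with $\D^2+N^{-1}$ on the right, and then expand
\[
\sum_{n\leq x}\bigl|g(Qn+a)-\exp F(Q)\bigr|^2=S_2-2\mathrm{Re}\bra{\exp(\conj{F(Q)})\,S_1}+x|\exp F(Q)|^2,
\]
where $S_1:=\sum_{n\leq x}g(Qn+a)$ and $S_2:=\sum_{n\leq x}|g(Qn+a)|^2$. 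The heart of the argument is to evaluate $S_1$ and $S_2$ to sufficient precision for the three terms above to cancel up to the allowed error.

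For $S_1$, I would apply a Hal\'asz--Montgomery--Vaughan mean-value theorem for multiplicative functions in arithmetic progressions, using the coprimality of $Qn+a$ to all primes $\leq N$ to eliminate small-prime truncation errors, to obtain
\[
S_1=x\prod_{\substack{p\leq x\\ p\nmid Q}}\bra{1-\tfrac{1}{p}}\bra{\sum_{k\geq 0}\frac{g(p^k)}{p^k}}+O\bigl(x\D(1,g;N,x)+xN^{-1/2}\bigr),
\]
and then apply $|e^z-1-z|\ll|z|^2$ prime-by-prime with $z=(g(p)-1)/p$ to rewrite the Euler product as $\exp F(Q)$ up to an error already absorbed in the $O(\cdot)$. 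The parallel computation for $|g|^2$, which is itself multiplicative and equal to $1$ away from a thin set of prime powers (by $\sum_{p\in T}1/p<\infty$), gives $S_2=x\exp(2\mathrm{Re}\,F(Q))+O(\cdot)$ of the same quality, and substitution yields the second-moment bound. The main obstacle is obtaining this mean-value estimate for $S_1$ with an error of exactly the right shape $\D(1,g;N,x)+N^{-1/2}$, uniformly in $Q$: this requires a careful Hal\'asz-type theorem in APs, which is established for completely multiplicative $g$ in \cite{klurman2021multiplicative}. The only extension needed here is to track the contribution of higher prime-power values $g(p^k)$ inside the local Euler factors, which is routine since their aggregate contribution is $O(\sum_p 1/p^2)=O(1)$.
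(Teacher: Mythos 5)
This paper does not prove \thref{concentration}; it is taken verbatim from \cite{klurman2021multiplicative} (Lemma~2.5), which the authors describe as ``based on the Tur\'an--Kubilius inequality,'' and used here as a black box. Your plan --- pass to $g(n)=f(n)\conj{\chi(n)}n^{-it}$, Cauchy--Schwarz down to a second moment, then estimate $S_1=\sum_{n\le x}g(Qn+a)$ and $S_2=\sum_{n\le x}|g(Qn+a)|^2$ via a Hal\'asz-type mean value theorem in arithmetic progressions --- contains a quantitative gap that does not close. Expanding, the second moment is exactly $E_2-2\mathrm{Re}\bra{\conj{\exp F(Q)}\,E_1}$ where $E_1=S_1-x\exp F(Q)$ and $E_2=S_2-x\abs{\exp F(Q)}^2$, so you genuinely need $E_1,E_2\ll x\bra{\D^2(1,g;N,x)+N^{-1}}$. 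But the mean-value estimate you invoke for $S_1$ carries an error of size $O\bra{x\D(1,g;N,x)+xN^{-1/2}}$, a full square root weaker. Substituting it gives a second moment of size $x\bra{\D+N^{-1/2}}$, and Cauchy--Schwarz from there produces only $x\bra{\D^{1/2}+N^{-1/4}}$, not the claimed $x\bra{\D+N^{-1/2}}$. Sharpening the Hal\'asz/Wirsing mean value in APs to an error $O\bra{x\D^2+xN^{-1}}$ is not an off-the-shelf result, and establishing it would be at least as hard as the lemma you are trying to prove; you have relocated the difficulty rather than resolved it.

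The actual proof avoids estimating $S_1$ and $S_2$ separately. One introduces the additive function $\tilde h(m)=\sum_{p^k\|m,\,p>N}\bra{g(p^k)-1}$ and compares $g(Qn+a)$ directly to $\exp\bra{\tilde h(Qn+a)}$: since every $Qn+a$ with $(a,Q)=1$ is coprime to all primes $p\le N$, only large primes contribute, and the pointwise replacement error is controlled by $\sum_{p^k\|m,\,p>N}\abs{g(p^k)-1}^2$. The heart of the argument is then the Tur\'an--Kubilius variance bound $\sum_{n\le x}\abs{\tilde h(Qn+a)-F(Q)}^2\ll x\bra{\D^2(1,g;N,x)+N^{-1}}$, after which $\mathrm{Re}\,\tilde h\le 0$ makes $z\mapsto e^z$ $1$-Lipschitz on the relevant half-plane, so $\abs{\exp\tilde h(m)-\exp F(Q)}\le\abs{\tilde h(m)-F(Q)}$, and a single application of Cauchy--Schwarz finishes. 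This bounds the variance in one stroke without needing the two large quantities $S_2$ and $2\mathrm{Re}\bra{\conj{\exp F(Q)}S_1}$ to cancel to high precision, which is exactly where your version loses.
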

\begin{remark}
    This inequality is necessary to deal with the case where $\{f(n) : n \in \N\}$ is infinite. If $f$ takes only finitely many values, then it is not hard to show that, if we let $S = \{p : f\bra{p^k} \neq \chi\bra{p^k}p^{ikt} \text{ for some } k\}$, then
    \[\sum\limits_{p}\frac{1-\Re \bra{f(p)\conj{\chi}(p)p^{-it}}}{p} < \infty \implies \sum\limits_{p \in S}\frac{1}{p} < \infty,\]
    which allows us to `sieve out' the primes in $S$. However, in the general case this is no longer true, and we must instead replace this with the concentration inequality, \thref{concentration}.
\end{remark}




\subsection{Applying the rotation trick}
With the relevant lemmata established, we can now move to proving \thref{stronger}. Much of this proof is based on the proof of Theorem 1.4 in \cite{klurman2021multiplicative}, but with several significant modifications. In particular, we need to deal with the extra exponential twists in the sums, and the fact we are working with multiplicative functions, rather than completely multiplicative functions, as well as some extra work to control the zeroes of $f$.
\bigbreak
We will first prove the following weaker version of \thref{stronger}. The only difference here is with the conclusion - \thref{stronger} says that the sum over $p$ of the maximum over $k$ of $\abs{f(p^k)-\chi(p^k)p^{ikt}}$ converges, but first we show that these individual terms tend to zero, except for possibly when $f\bra{p^k}=0$. We will then use this to prove the full \thref{stronger}.

\begin{prop}\thlabel{weaker}
    Let $f: \N \ra \U \cup \{0\}$ be a multiplicative function, and assume that
    \[\sum_{p:f(p) = 0}\frac{1}{p} < \infty.\]
   If, for some $\al \in \R$,
    \[\sup\limits_{x}\abs{S_f(x,\al)} < \infty,\]
    then there is a Dirichlet character $\chi$ and a real number $t \in \R$ such that
    \[\lim_{p\ra\infty}\max_{k: f\bra{p^k} \neq 0}\abs{f(p^k)-\chi(p^k)p^{ikt}} = 0.\]

\end{prop}

\begin{proof}
Let $\ep$, $H$, $w$ and $x$ be parameters satisfying
\[1 \ll \frac{1}{\ep} \ll H \ll w \ll x,\]
and we let $W = \prod_{p \leq w}p^w$. We also fix $k_1,\ldots, k_H$ (to be chosen later) such that the $k_j$ are pairwise coprime and all coprime to $W.$ 
\bigbreak
With $P := \prod_{1\le j \le H} k_j^2,$ it then follows from the Chinese remainder theorem that we can find $n_0 \le P$ such that 
\[Wn_0 = k_j - j \pmod{k_j^2}\] for all $j.$ In particular, when $n = n_0 + mP$ for some $m$, we have for all $j,$
\[jk_j \|\bra{Wn+j},\]
so that
\begin{align*}
    f(Wn + j) &= f\big(W(n_0 + mP) + j\big)
    \\&= f(j) f(k_j)f\bra{\frac{W n_0}{j k_j} + \frac{WmP}{j k_j} + \frac{1}{k_j}}
    \\&= f(j) f(k_j)f\bra{\frac{WmP}{jk_j} + \frac{\frac{W n_0}{j} + 1}{{k_j}}}.
\end{align*}

\bigbreak

Applying \thref{concentration}, with \[Q = \frac{WP}{jk_j},\, N = w,\, a= \frac{\frac{W n_0}{j} + 1}{{k_j}},\]
noting that by definition of $F$ we have $F(Q) = F(WP)$ for all $j,$ and that by construction of $n_0$ we have $a$ coprime with all the $k_{j'}$ and therefore with $Q,$
and picking $w$ large enough such that $\D\bra{f, \chi(n)n^{it};w,\infty} \leq 1/H,$ we have 
\begin{equation}\label{eq1}
\frac{1}{x}\sum\limits_{m \leq x}\abs{f\bra{\frac{WmP}{jk_j} + \frac{\frac{W n_0}{j} + 1}{{k_j}}} - \chi\bra{\frac{\frac{W n_0}{j} + 1}{{k_j}}}\bra{\frac{WmP}{jk_j}}^{it} \exp\bra{F(WP)}} \ll \frac{1}{H}.
\end{equation}

By construction, with $q$ the conductor of $\chi$ and choosing $w \ge q,$
\[\frac{\frac{W n_0}{j} + 1}{{k_j}} = k_j^{-1} \pmod{q},\]
and thus
\[\chi\bra{\frac{\frac{W n_0}{j} + 1}{{k_j}}} = \chi\bra{k_j^{-1}} = \conj{\chi}(k_j).\]

\smallbreak
Therefore, after multiplying by $f(j)f(k_j) \e(j \al)$, which has absolute value $0$ or $1$, the summands in the left hand side of \eqref{eq1} can be written as

\[f(j)f(k_j)\e(j\al)f\bra{\frac{WmP}{jk_j} + \frac{\frac{W n_0}{j} + 1}{{k_j}}} -f(j)f(k_j)\e(j\al)\conj{\chi}(k_j)k_j^{-it}\bra{\frac{WmP}{j}}^{it}\exp\bra{F(W)}.\]
Since from above we have 
\[f(j)f(k_j)f\bra{\frac{WmP}{jk_j} + \frac{\frac{W n_0}{j} + 1}{{k_j}}} = f(Wn+j),\]
we see that \eqref{eq1} can be written as

\[\frac{1}{x} \sum\limits_{m \leq x}\abs{f(Wn+j)\e(j\al) - \e(j\al)f(j)f(k_j) \conj{\chi}(k_j)k_j^{-it}\bra{\frac{WmP}{j} }^{it}\exp\bra{F(WP)}} \ll \frac{1}{H},\]
where $n = n_0 + mP$. Summing over $1 \le j \le H$ and using the triangle inequality, we then have
\begin{align*}
    2\sup\limits_{n\le WP(x+2)} \abs{S_f(n,\al)} &\ge \frac{1}{x} \sum\limits_{m\le x} \abs{\sum_{1\le j \le H} f(Wn+j) \e(j\al)} \\&= \frac{1}{x} \sum\limits_{m\le x} \abs{(WmP)^{it} \exp\bra{F(WP)} \sum_{1\le j \le H} \e(j\al) f(j) f(k_j) \conj{\chi}(k_j) k_j^{-it} j^{-it}} + O(1) 
    \\&= \abs{\exp\bra{F(WP)}} \abs{\sum_{1\le j \le H} \e(j\al) f(j) f(k_j) j^{-it} \conj{\chi}(k_j) k_j^{-it}} + O(1)
    \\&= \abs{\sum_{1\le j \le H} \e(j\al) f(j) f(k_j) j^{-it} \conj{\chi}(k_j) k_j^{-it}} + O(1)
\end{align*}
noting that $\abs{\Re\big(F(WP)\big)} \le \D^2\bra{f, \chi(n)n^{it};w,\infty} \le 1/H$ when $w$ is large enough.
\smallbreak

Now note that if $S$ is finite, then certainly
\[\lim\limits_{p \ra \infty}\max_k \abs{f(p^k) - \chi(p^k)p^{ikt}} \ra 0,\]
since $f(p^k)=\chi(p^k)p^{ikt}$ for all large enough $p$. So we assume that $S$ is infinite, and show how we can choose the $k_j$ to make 
\[\abs{\sum\limits_{1 \leq j \leq H}\e(j\al)f(j)j^{-it}f(k_j)\conj{\chi}(k_j)k_j^{-it}}\]
large, and hence derive a contradiction, unless $\lim\limits_{p \ra \infty}\max_{k: f(p^k) \neq 0} \abs{f(p^k) - \chi(p^k)p^{ikt}} \ra 0$.
\bigbreak
For $j$ such that $f(j)\ne 0,$ let $\be_j$ be such that $\e(\be_j) = \e(j\al)f(j)j^{-it}$, and let $g(n) = f(n)\conj{\chi}(n)n^{-it}$. We note that, as a product of multiplicative functions, $g$ is itself multiplicative. We now show how we can construct $k_j$ such that $g(k_j) \approx \e(-\be_j),$ so that \[\sum\limits_{1 \leq j \leq H}\e(j\al)f(j)j^{-it}f(k_j)\conj{\chi}(k_j)k_j^{-it} = \sum_{\substack{1\le j \le H \\ f(j)\ne 0}} \e(\be_j) g(k_j) \gg H.\]

\smallbreak
Since by assumption there are infinitely many prime powers $p^{\ell}$ such that $g\bra{p^{\ell}} \neq 0, 1$, there must be some sequence $p_m^{\ell_m}$ such that $g\bra{p_m^{{\ell}_m}} \ra \e\bra{\ph}$ for some $ 0 \leq \ph < 1$, where each $g\bra{p_m^{\ell_m}}$ is a complex number of absolute value $1$, but not equal to $1$. We need to consider separately the cases where $\ph$ is rational or irrational.
\bigbreak
In the case where $\ph$ is irrational, since $\{\e(r\ph) : r \in \N\}$ is equidistributed on the unit circle, there is some integer $r$ such that 
\[\abs{\e\bra{r\ph}\e\bra{\be_j}-1} \leq \frac{1}{100}.\]
Since $g\bra{p_m^{\ell_m}} \ra \e\bra{\ph}$, we can find $N \gg w$ such that $\abs{g\bra{p_m^{\ell_m}}-\e\bra{\ph}} < \frac{1}{100r}$ for every $N < m \leq N+r$, and hence setting $k_j = \prod\limits_{N < m \leq N+r} p_m^{\ell_m}$, we have
\begin{align*}
    \e\bra{\be_j}g\bra{k_j} &= \e\bra{\be_j}g\bra{ \prod\limits_{N < m \leq N+r} p_m^{\ell_m}}
    \\&= \e\bra{\be_j}\prod\limits_{N < m \leq N+r} g\bra{p_m^{\ell_m}},
\end{align*}
and therefore
\begin{align*}
    \abs{\e\bra{\be_j}g\bra{k_j} - 1} &= \abs{\e\bra{\be_j}\prod\limits_{N < m \leq N+r} g\bra{p_m^{\ell_m}}-1}
    \\&\leq \abs{\e\bra{\be_j}\e\bra{r\ph}-1} + \frac{1}{100}
    \\&\leq \frac{1}{50}.
\end{align*}

We can do this for each $\be_j$, making sure we choose different primes each time (which is possible by assumption), so that
\[\sum_{\substack{1\le j \le H \\ f(j)\ne 0}} \e(\be_j) g(k_j) \gg H.\]
\bigbreak
In the case where $\ph$ is rational, but $\ph \neq 0, \frac{1}{2}$, by the pigeonhole principle there is some $r$ such that $\abs{\e(r\ph)\e(\be_j)-1} \leq \frac{1}{3}$, and the argument is essentially the same as above, only possibly with a worse implicit constant, and so in this case we also have
\[\sum_{\substack{1\le j \le H \\ f(j)\ne 0}} \e(\be_j) g(k_j) \gg H.\]
\bigbreak
When $\ph = \frac{1}{2}$, we can do something similar since a positive proportion of the $\e(\be_j)$ are not equal to $\pm i$, by using the primes to rotate by $-1$.
\bigbreak
Thus the only remaining case is where, for every convergent sequence $p_m^{\ell_m}$, we have $g(p_m^{\ell_m}) \ra 1$. It is not too hard to show that this is equivalent to the statement that
\[\lim\limits_{p \ra \infty} \max_{k: f(p^k) \neq 0} \abs{g(p^k)-1} = 0.\]
Indeed, suppose $\max_{k:f(p^k) \neq 0} \abs{g(p^k)-1}$ does not tend to $0$ as $p \ra \infty$. Then there is $\ep>0$ and a sequence $p_i$ such that for each $p_i$ there is $k_i$ such that
\[\abs{g(p_i^{k_i}) - 1} > \ep,\]
and hence there must be a convergent subsequence of $\{g(p_i^{k_i})\}$, whose limit is necessarily at least $\ep$ away from $1$, but is not $0$. Recalling that $g(n)=f(n)\conj{\chi}(n)n^{-it}$, we have
\[\lim\limits_{p \ra \infty} \max_{k: f(p^k) \neq 0} \abs{g(p^k)-1} = 0 \iff \lim\limits_{p \ra \infty} \max_{k: f(p^k) \neq 0} \abs{f(p^k)-\chi(p^k)p^{ikt}} = 0,\]
proving \thref{weaker}.
\end{proof}

We now show that, using another iteration of the rotation trick, one can remove the condition in \thref{weaker} that $f(p^k) \neq 0$, by showing that in fact there are only finitely many primes $p$ where $f(p^k)$ is zero.

\begin{lem}\thlabel{zeroes}
Let $f: \N \ra \U \cup \{0\}$ be a multiplicative function, and assume that
    \[\sum_{p:f(p) = 0}\frac{1}{p} < \infty.\]
   If, for some $\al \in \R$,
    \[\sup\limits_{x}\abs{S_f(x,\al)} < \infty,\]
    then the set
    \[\{p : f(p^k) = 0 \text{ for some }k\}\]
    is finite.
\end{lem}

\begin{proof}
    By \thref{weaker}, there is some $\chi$ and $t$ such that, if $g(n) = f(n)\conj{\chi}(n)n^{-it}$,
\[\lim_{p\to\infty}\max_{k:f(p^{k})\ne0}|g(p^{k})-1|=0.\]
We assume for a contradiction that
\[P_0 = \{p:f(p^k)=0 \text{ for some }k\}\]
is infinite. The method here will be similar to how we used certain prime powers to `rotate' and force the sum to be large in the proof of \thref{weaker} - in this case, we destroy the cancellation in the sum by multiplying certain terms in the sum by zero. As before, we do this by making the sum

\[\left|\sum_{1 \le j \le H}f(Wn+j)\e(j\alpha)\right|\]
over a carefully chosen arithmetic progression large by showing that we can make

\[\left| \sum_{1\le j\le H} \e(\beta_j)g(k_j) \right|\]
grow with $H$, where $\e(\beta_j) = \e(j\alpha)f(j)j^{-it}$.
\bigbreak
First, we define 4 sets as follows:
\begin{align*}
J_{E} &= \{j : -\frac{1}{8} \le \be_j < \frac{1}{8}\},\\
J_{N} &= \{j : \frac{1}{8} \le \be_j < \frac{3}{8}\},\\  
J_{W} &= \{j : \frac{3}{8} \le \be_j < \frac{5}{8}\},\\   
J_{S} &= \{j : -\frac{3}{8} \le \be_j < -\frac{1}{8}\},\\   
\end{align*}
picking the $\be_j$ (defined modulo $1$) in $[-3/8, 5/8)$ so that $J_E, J_N, J_W, J_S$ partition the indices $1\le j\le H.$

Clearly one of the 4 sets contains at least $\frac{H}{4}$ of the indices $1 \leq j \leq H$ - let this set be $J_{good}$ and the union of the other 3 be $J_{bad}$. 
\smallbreak
For each $j$ in $J_{bad}$, select $p_j \in P_0$ such that $p_j > H$. Let $\ell_j$ be the power of $p_j$ for which $f(p_j^{\ell_j})=0$, and set $k_j = p_j^{\ell_j}$. It therefore follows that for each $j \in J_{bad}$, $g(k_j)=0$. For $j$ in $J_{good},$ we simply take $k_j = 1,$ so that $g(k_j) = 1.$

\smallbreak
As in the proof of \thref{weaker}, we can use the Chinese remainder theorem to guarantee some arithmetic progression $Wn+1, \ldots, Wn+H$ where each $k_j$ is an exact divisor of $Wn+j$ and is coprime to all other $Wn+j'$ for $j' \neq j$. Then, we have
\[\sum_{j=1}^{H} \e(\beta_j)g(k_j) = \sum\limits_{j \in J_{good}}\e(\be_j) \gg H.\]
Indeed, assume for example that $J_{good} = J_E$, then
\[\text{Re}\bra{\sum\limits_{j \in J_{good}}\e(\be_j)} \geq \frac{1}{\sqrt{2}}\abs{J_{good}} \geq \frac{H}{4\sqrt{2}},\]
and similarly for the other three cases, the real or imaginary part must be $\gg H$ in magnitude.
\smallbreak
But by the same reasoning as in the first iteration of the rotation trick in \thref{weaker}, we have a contradiction to $\sum\limits_{n \leq x}f(n) \e(n\al)$ being bounded, and hence we were wrong to assume $P_0$ was infinite. So, we conclude that there must be finitely many primes $p$ where $f(p^k) =0$ for some $k$.
\end{proof}

We can now strengthen \thref{weaker} to prove the full \thref{stronger}.
\begin{proof}[Proof of \thref{stronger}]
Assume that, with $g(n) = f(n)\conj{\chi}(n)n^{-it}$ as above,
$$\lim\limits_{\substack{p \ra \infty}} \max_{k} \abs{g(p^k)-1} = 0, \text{ but } \sum_{p} \max_{k} \abs{g(p^k)-1} = +\infty.$$ 
Then we can find for each $p > w$ some $m_p$ and $\theta_p$ such that 
\begin{align*}
\forall p, g(p^{m_p}) &= \e(\theta_p),\\
\lim\limits_{p \ra \infty} \theta_p &= 0,\\ 
\sum_{p > w} |\theta_p| &= +\infty,
\end{align*} since we have $\theta_p \asymp g(p^{m_p}) - 1$ when $g(p^{m_p})\ra 1.$ 
Without loss of generality, we may assume $$\sum_{\substack{p > w\\ \theta_p > 0}} \theta_p = +\infty$$ (the case where the sum over $\theta_p < 0$ goes to infinity is similar).
Now, pick $M_1$ such that for all $p \ge M_1, |\theta_p| < \frac{1}{20}$ and pick $N_1\ge M_1$ the smallest integer such that $$\sum_{\substack{M_1\le p\le N_1\\ \theta_p > 0}} \theta_p \ge 1-\beta_1.$$ Then, $M_1$ and $N_1$ verify
$$1 - \beta_j \le \sum_{\substack{M_1\le p\le N_1\\ \theta_p > 0}} \theta_p < 1-\beta_1 + \theta_{N_1} < 1 - \beta_1 + \frac{1}{20},$$ so that defining 
$$k_1 := \prod_{\substack{M_1\le p\le N_1\\ \theta_p > 0} }p^{m_p},$$ 
we have $$\abs{\e(\beta_1) g(k_1) - 1} = \abs{\e\bra{\beta_1 + \sum_{\substack{M_1\le p\le N_1\\ \theta_p > 0}} \theta_p} - 1} \le \frac{1}{20}.$$ We may not reuse the same primes to build the other $k_j,$ but the sum of $|\theta_p|$ over the leftover primes is still infinite as we only used a finite number of primes, allowing us to repeat the same argument for the other $j.$ By the same argument as in the proof of \thref{weaker}, this contradicts the assumption that $\sum f(n)\e(n\al)$ is bounded.
\smallbreak
This leaves us with the case where $$\sum_{p} \max_k \abs{g(p^k)-1} < \infty.$$ Recalling that $g(n)=f(n)\conj{\chi}(n)n^{-it}$, this is equivalent to the condition appearing in \thref{stronger}, which concludes the proof.
\end{proof}

\section{A result depending on $\alpha$}\label{4}
\subsection{Statement of results}

In this section, we prove the following result, which requires a weaker condition on $f$ but the result we obtain depends on $\alpha$. We will then use this to deduce \thref{special,bazinmain}. 
\smallbreak
As in Section \ref{3}, under the assumptions of \thref{setup-bazinmain}, we know $f$ is $\chi(n) n^{it}$-pretentious for some $\chi, t$ by \thref{new1}. We may furthermore assume that $\chi$ is primitive, as if $\chi$ is derived from a primitive character $\chi',$ then $f$ is also $\chi'(n) n^{it}$-pretentious. We then write
\begin{equation}\label{def-h}
    h(n) := \sum_{dk=n} \mu(d) f(k) \conj{\chi}(k) k^{-it},
\end{equation}
so that 
\begin{equation}
    f(n) = \chi(n) n^{it} \sum_{d|n} h(d). 
\end{equation}

\begin{thm}\thlabel{setup-bazinmain}
    Let $f: \N \ra \U \cup \{0\}$ be a multiplicative function, and assume that
    \[\sum_{p:f(p) = 0}\frac{1}{p} < \infty.\]
   If, for some $\al \in \R\smallsetminus\Q$,
    \[\sup\limits_{x}\abs{S_f(x,\al)} < \infty,\]
    then there is a Dirichlet character $\chi$ and a real number $t \in \R$ such that for $m$ the conductor of $\chi,$ $h$ defined by \eqref{def-h},
    $$\PP := \left\{p, \sum_{k\ge 0} \frac{h(p^k)}{p^k} = 0\right\},$$
    $P := \prod_{p\in\PP} p$,\footnote{Note that $\PP$ is always finite and in fact $\PP \subset \{2, 3\}.$ Indeed for all $k$ we have $\abs{h(p^k)} \le 2,$ so that $\abs{\sum_{k\ge 0} \frac{h(p^k)}{p^k}} \ge 1 - \sum_{k\ge 1} \frac{2}{p^k} > 0$ for $p > 3.$} and $g$ the multiplicative function defined by
    \[g(p^j) := \begin{cases}
         \chi(p^j) \frac{\sum_{k\ge 0} h\bra{p^{j+k + \Ind_{p\in\PP}}}p^{-k}}{\sum_{k\ge 0} h\bra{p^{k + \Ind_{p\in\PP}}}p^{-k}}, \text{ if } p\nmid m,
         \\ f(p^j) p^{-jit}, \text{ if } p|m
    \end{cases}\]
    (where the extra $\Ind_{p\in\PP}$ term ensures we are not dividing by $0,$ since $\sum_{k\ge 0} \frac{h\bra{p^{k + 1}}}{p^k} = -ph(1) = -p$ for $p\in\PP$), we have
    \[\sum_{\substack{a,n \\ (a,nPm) = 1}} \bra{\frac{|g(n)|}{n\abs{\alpha - \frac{a}{nPm}}}}^2 < \infty.\]
\end{thm}

The Duffin--Schaeffer theorem, proven by Koukoulopoulos and Maynard in 2020 \cite{koukoulopoulos2020duffinschaeffer}, tells us exactly for which $g$ the term $\frac{|g(n)|}{n\abs{\alpha - \frac{a}{nPm}}}$ is unbounded for almost all $\alpha.$ \footnote{Note that Duffin and Schaeffer in 1941 \cite{duffin1941khintchine} had already solved the case where $\sum_{n\le x} |g(n)| \frac{\phi(n)}{n} \gg \sum_{n\le x} |g(n)|,$ which is sufficient for our proof as $g$ is multiplicative.} As a consequence of it and \thref{setup-bazinmain}, we obtain a strengthening of \thref{stronger} when we assume the exponential sum is bounded for a positive measure set of $\alpha$, which is \thref{bazinmain}.

The proof of \thref{setup-bazinmain} is based on the following theorem of the first author \cite[Theorem 8]{bazin2025exponentialsums}, which provides a lower bound on exponential sums over short intervals when the exponential sum has major arcs at rationals.

\begin{thm}[Large sieve lower bound for exponential sums]\thlabel{large-sieve}
    Let $f$ supported on $[1,x].$ Then for $y \le x$ and $Q \le x^{1/2},$ we have 
    \begin{equation}\label{large-sieve-eq}
        \sum_{-y < n < x} \abs{\sum_{n < m \le n+y} f(m) \e(\alpha m)}^2 \gg \frac{1}{x} \sum_{\substack{q\le Q \\ (a,q) = 1}} \abs{S_f\bra{x,\frac{a}{q}}}^2 \abs{S_1\bra{y,\alpha - \frac{a}{q}}}^2,
    \end{equation}
    where $S_1(y,\be) = \sum_{n=1}^y \e(\be n).$ 
\end{thm}

When $f$ has bounded exponential sums, the left-hand side of \eqref{large-sieve-eq} is $\ll x+y.$ Moreover, when $f$ is pretentious (which we can assume by \thref{new1}), \thref{pretentious-majorarc} below shows $S_f\bra{x,\frac{a}{q}}$ is $\gg_q x$ when $f$ disagrees with $\chi(n)n^{it}$ at primes dividing $q,$ which we use to get a contradiction in \eqref{large-sieve-eq}. 

\subsection{Technical lemmas}

We need to quantify the size of $S_f\bra{x,\frac{a}{q}}$ at rationals in order to use \thref{large-sieve}. To do this, we use Hal\'asz's theorem as stated in \cite[Theorem III.4.5]{tenenbaum}.

\begin{lem}[Hal\'asz]\thlabel{halasz}
    Let $f$ be a multiplicative function taking values in the unit disc.
    \begin{itemize}
        \item If $\D(f, n^{it}, \infty) < \infty$ for some $t,$ then with $f(n) n^{-it} = 1 * h,$ we have as $x\to\infty$ \[\sum_{n\le x} f(n) 
        = \frac{x^{1+it}}{1+it} \prod_{p\le x}\sum_{k\ge 0} \frac{h(p^k)}{p^k} + o(x).\]
        \item Else, we have as $x\to\infty$ \[\sum_{n\le x} f(n) = o(x).\]
    \end{itemize}
\end{lem}

We then deduce an estimate of $\sum_{n\le x} f(dn)\conj{\chi_1}(n)$ for fixed $d$ and a character $\chi_1.$
\begin{lem}\thlabel{halasz-character}
    Let $f$ be a multiplicative function taking values in the unit disc. Assume $f$ is $\chi(n)n^{it}$-pretentious (where $\chi$ is primitive) and let $h(n) = \mu * f\conj{\chi} n^{-it}.$ Then for fixed $d, k$ and $\chi_1$ a character of conductor $k,$
    \begin{itemize}
        \item If $\chi_1$ is derived from $\chi,$ we have as $x\to\infty$ \[\sum_{n\le x} f(dn)\conj{\chi_1}(n) = \frac{x^{1+it}}{1+it} \frac{\ph(k)}{k} \prod_{p\le x} g_1(p) + o(x),\] where 
        $$g_1(p) =  \begin{cases}
            \sum_{r\ge 0} \frac{h(p^r)}{p^r}, &\text{ if } p \nmid dk, \\ 
            \frac{f(p^j)}{p^{j(1+it)}} + \chi(p^j) \sum_{r > j} \frac{h(p^{r})}{p^r}, &\text{ if } p^j\|d \text{ and } p\nmid k, \\
            \frac{f(p^j)}{p^{j(1+it)}}, &\text{ if } p|k \text{ and } p^j\|D \ (\text{allowing }j = 0).
    \end{cases}$$
        \item Else, we have as $x\to\infty$ \[\sum_{n\le x} f(dn) \conj{\chi_1}(n) = o(x).\]
    \end{itemize}
\end{lem}
\begin{proof}
    If $f$ is zero on multiples of $d,$ then $\sum_{n\le x/d} f(dn) \conj{\chi_1}(n) = 0.$ Otherwise, let $D$ the smallest multiple of $d$ such that $f(D) \ne 0$ (which has the same prime factors as $d$ with possibly higher multiplicities). Then, 
    \[\sum_{n\le x/d} f(dn) \conj{\chi_1}(n) = f(D) \conj{\chi_1}(D/d) \sum_{n\le x/D} \frac{f(Dn)}{f(D)} \conj{\chi_1}(n),\]
    where $\frac{f(Dn)}{f(D)} \conj{\chi_1}(n)$ is multiplicative and $\conj{\chi_1}(n)\chi(n) n^{it}$-pretentious. In particular, it is $n^{it}$-pretentious if and only if $m$ divides $k$ and $\chi_1$ is derived from $\chi.$ Thus the second case of the lemma follows from the second case of \thref{halasz}. We now assume $\chi_1$ is indeed derived from $\chi.$ In this case, we have $\frac{f(Dn)}{f(D)}\conj{\chi_1}(n)n^{-it} = 1 * h_1(n)$ with 
    $$h_1(p^r) =  \begin{cases}
        h(p^r), &\text{ if } p \nmid dk, \\ 
        -\Ind_{r=1}, &\text{ if } p | k, \\
        h(p^{r+j})\frac{\chi(p^j) p^{jit}}{f(p^j)}, &\text{ if } p^j\|D \text{ and } p\nmid k
    \end{cases}$$
    so that by \thref{halasz} we have when $x\to\infty$ while $D,\chi_1$ are fixed,
    \begin{align*}
        \sum_{n\le x/d} f(dn) \conj{\chi_1}(n) &= f(D) \conj{\chi_1}(D/d) \frac{(x/D)^{1+it}}{1+it} \prod_{p\le x/D} \sum_{r\ge 0} \frac{h_1(p^r)}{p^r} + o(x)
        \\&= \frac{x^{1+it}}{1+it} \prod_{p\le x} g_1(p) \frac{\ph(k)}{k} + o(x),
    \end{align*}
    where we could extend the product since $\sum_{x/D \le p \le x} \frac{1}{p} \ll \frac{1}{\log x}$ and we defined
    $$g_1(p) =  \begin{cases}
        \sum_{r\ge 0} \frac{h(p^r)}{p^r}, &\text{ if } p \nmid dk, \\ 
        f(p^J) \frac{\conj{\chi}(p^{J-j})}{p^{J(1+it)}} + \sum_{r\ge 1} \frac{h(p^{r+J}) \chi(p^j)}{p^{r+J}}, &\text{ if } p|d \text{ and } p\nmid k \text{ with } p^j\|d  \text{ and } p^J\|D, \\
        \frac{f(p^J)}{p^{J(1+it)}} \Ind_{p\nmid D/d}, &\text{ if } p|k \text{ and } p^J\|D \ (J \text{ might be } 0).
    \end{cases}$$
    Since $p$ divides $D/d$ if and only if $f(p^j) = 0$ for $p^j\|d,$ we have $g_1(p) = \frac{f(p^j)}{p^j}$ if $p|k$ and $p^j\|d.$ Moreover if $p|d$ and $p\nmid k,$ we have by definition of $h$ with $p^j\|d, p^J\|D$
    \begin{align*}
        g_1(p) &= \frac{\chi(p^j)}{p^J}\sum_{0\le r\le J} h(p^r) + \sum_{r\ge 1} \frac{h(p^{r+J}) \chi(p^j)}{p^{r+J}}
        \\&= \frac{f(p^j)}{p^Jp^{jit}} + \chi(p^j) \sum_{j < r \le J} \frac{h(p^r)}{p^J} + \chi(p^j) \sum_{r > J} \frac{h(p^r)}{p^r)}
        \\&= \frac{f(p^j)}{p^jp^{jit}} + \chi(p^j) \sum_{r > j} \frac{h(p^r)}{p^r)},
    \end{align*}
    noting that if $J > j$ we have $f(p^j) = 0$ and $h(p^r) = 0$ for $j < r < J.$
\end{proof}

We can now estimate $S_f\bra{x, \frac{a}{q}}$ when $q$ is a multiple of the conductor of $\chi.$ 

\begin{lem}\thlabel{pretentious-majorarc}
    Assume $f$ is $\chi(n)n^{it}$-pretentious, where $\chi$ is a primitive Dirichlet character of conductor $m.$ Define $P$, $h$ and $g$ as in \thref{setup-bazinmain}. Then for fixed $q$ and $a$ with $(a, qPm) = 1,$ we have when $x\to\infty$ $$S_f\bra{x, \frac{a}{qPm}} = \frac{x^{1+it}}{1+it} G(\chi) \conj{\chi}(a) \Pi(f, x) \frac{g(q)}{qPm} +o_q(x),$$ where we defined the Gauss sum $G(\chi) = \sum_{b\bmod m} \chi(b) \e\bra{\frac{b}{m}}$ and the product 
    \[\Pi(f, x) := \prod_{\substack{p\le x \\ p\nmid m}} \sum_{r\ge 0} \frac{h\bra{p^{r+\Ind_{p\in\PP}}}}{p^r}.\]
\end{lem}
\begin{remark}\thlabel{majorabs}
    While $\Pi(f,x)$ does not necessarily converge when $x\to\infty,$ we know its module $\abs{\Pi(f,x)}$ converges. Indeed since $\PP$ is finite and $\log \abs{1+z} = \Re(z) + O\bra{|z|^2},$ we have
    \[\log \abs{\Pi(f, x)} = \sum_{\substack{p\le x \\ p\nmid m}} \frac{\Re(h(p))}{p} + O\bra{\frac{1}{p^2}} = O(1)\] when $f$ is $\chi(n)n^{it}$-pretentious.
    
    We thus get from \thref{pretentious-majorarc}
    \[\abs{S_f\bra{x, \frac{a}{qPm}}} = x \mathfrak{S}(f) \frac{g(q)}{q} + o_q(x),\]
    where \[\mathfrak{S}(f) := \abs{\frac{G(\chi)}{1+it}}\frac{1}{Pm} \prod_{p\nmid m} \abs{\sum_{r\ge 0} \frac{h\bra{p^{r+\Ind_{p\in\PP}}}}{p^r}}\]
    is a non-zero constant that depends only on $f.$
\end{remark}

\begin{proof}
    Splitting the exponential as a character sum, we have for all $n,$
    \[\e\bra{\frac{a}{qPm} n} = \sum_{dk = qPm} \Ind_{(n,qPm) = d} \e\bra{\frac{an/d}{k}} = \sum_{dk = qPm} \Ind_{(n, qPm) = d} \frac{1}{\ph(k)} \sum_{\chi_1 \bmod k} G(\chi_1) \conj{\chi_1}(an/d),\] noting that $an/d$ is coprime with $k$ when $(n, dk) = d.$ 
    \smallbreak

    Injecting this in the definition of $S_f\bra{x, \frac{a}{qPm}},$ we have \[S_f\bra{x, \frac{a}{qPm}} = \sum_{dk = qPm} \frac{1}{\ph(k)} \sum_{\chi_1 \bmod k} G(\chi_1) \conj{\chi_1}(a) \sum_{n \le x/d} f(dn) \conj{\chi_1}(n).\] 
    By \thref{halasz-character}, when $x\to\infty$ while $q$ is fixed, the main term in this expression is given by characters $\chi_1$ derived from $\chi.$ These exist if and only if $m$ divides $k$ and $\mu^2(k/m) = (m, k/m) = 1,$ and we have in this case $G(\chi_1) = G(\chi)\mu(k/m) \chi(k/m).$ Therefore, writing $k = \ell m$ we have
    \[S_f\bra{x, \frac{a}{qPm}} = \frac{x^{1+it}}{1+it} \sum_{d\ell = qP} \frac{\mu(\ell) \chi(\ell)}{\ell m} \conj{\chi}(a) \prod_{p\le x} g_d(p) + o_q(x), \] where $g_d$ is the function $g_1$ from \thref{halasz-character} (which depends on $d$ and $k = qPm/d,$ hence the change of notation). Remembering the definition of $g_d$ and noting that $d \mapsto g_d(p)$ only depends on the $p$-adic valuation of $d$ and $k,$ we can factor the sum over $d$ as a product over primes
    \[S_f\bra{x, \frac{a}{qPm}} = \frac{x^{1+it}}{1+it} G(\chi) \frac{\conj{\chi}(a)}{m} \prod_{p\le x} \Tilde{g}(p) + o_q(x)\] where 
    $$\Tilde{g}(p) =  \begin{cases}
            \sum_{r\ge 0} \frac{h(p^r)}{p^r}, &\text{ if } p \nmid qPm, \\ 
            \frac{f(p^j)}{p^{j(1+it)}} + \chi(p^j) \sum_{r > j} \frac{h(p^{r})}{p^r} - \frac{\chi(p)}{p}\frac{f(p^{j-1})}{p^{(j-1)(1+it)}}, &\text{ if } p^j\|qP \text{ and } p\nmid m, \\
            \frac{f(p^j)}{p^{j(1+it)}}, \text{ if } p|m &\text{ and } p^j\|qP \ (j \text{ might be } 0).
    \end{cases}$$
    In the case where $p^j\|qP$ and $p\nmid m,$ we actually have $\Tilde{g}(p) = \chi(p^j) \sum_{r \ge j} \frac{h(p^{r})}{p^r}$ by definition of $h(p^j).$ The conclusion then follows from the definitions of $\Tilde{g}$ and $g.$     
\end{proof}

We also need a technical lemma to show we can ensure $S_1\bra{y,\alpha - \frac{a}{q}} \gg \frac{1}{\abs{\alpha - \frac{a}{q}}}$ is true simultaneously for many values of $q$ for the right choice of $y.$

\begin{lem}\thlabel{technical-E}
    Let $\alpha \in\R\smallsetminus\Q.$ Then for any finite set of rationals $\frac{a_1}{q_1},\ldots, \frac{a_k}{q_k},$ there exists $y\in\N$ such that $$\abs{S_1\bra{y,\alpha - \frac{a_i}{q_i}}} \ge \frac{1}{2\pi\abs{\alpha - \frac{a_i}{q_i}}}.$$
\end{lem}
\begin{proof}
    We note $\beta_i := \alpha - \frac{a_i}{q_i}.$ We have for all $i,$ $$\abs{S_1(y,\be_i)} = \abs{\sum_{n=1}^y \e\bra{\beta_i n}} = \abs{\frac{\sin (\pi\beta_i y)}{\sin (\pi\beta_i)}} \ge \frac{\abs{\sin(\pi\beta_i y)}}{\pi|\beta_i|},$$ so that the result is true as soon as all $\beta_i y$ are simultaneously at least $1/6$ away from all integers. 

    We now take $y := nQ,$ with $Q := \prod_i q_i$ and $n$ to be fixed. This way, we have for all $i,$ $$\beta_i y = \alpha Qn - a_i n \frac{Q}{q_i} \equiv \alpha Q n \mod 1.$$ It is then sufficient to take $n$ such that the fractional part of $\alpha Q n$ is between $1/6$ and $5/6,$ which exists as $\alpha Q$ is irrational.
    
\end{proof}

\subsection{Proof of \thref{setup-bazinmain}}

\begin{proof}[Proof of \thref{setup-bazinmain}]
    By \thref{new1}, we may assume $f$ is $\chi(n)n^{it}$-pretentious for some $\chi,t.$ We may also assume $\chi$ is primitive as if $\chi$ derives for $\chi',$ $f$ is also $\chi'(n)n^{it}$-pretentious. Now, assume by contradiction that \[\sum_{\substack{a,n \\ (a,nm) = 1}} \bra{\frac{|g(n)|}{n\abs{\alpha - \frac{a}{nPm}}}}^2 = +\infty.\] 
    Then for all $C > 0,$ we can find a finite set of rationals $\frac{a_1}{q_1},\ldots, \frac{a_k}{q_k}$ (where $k$ may depend on $C$) such that \[\sum_{i=1}^k \bra{\frac{|g(q_i)|}{q_i\abs{\alpha - \frac{a_i}{q_iPm}}}}^2 > C.\] 
    We then use \thref{technical-E} to get $y$ (which may depend on $C$) such that for all $i,$ \[\abs{S_1\bra{y,\alpha - \frac{a_i}{q_iPm}}} \gg \frac{1}{\abs{\alpha - \frac{a_i}{q_iPm}}},\] where the implied constant crucially does not depend on $C.$ Moreover, by \thref{majorabs} we have for all $i$ 
    \[\frac{1}{x} \abs{S_f\bra{x, \frac{a_i}{q_iPm}}} \to_{x\ra \infty} \mathfrak{S}(f)\frac{|g(q_i)|}{q_i}.\] We can thus find $x \ge \max\{y, (Pmq_1)^2,\ldots, (Pmq_k)^2\}$ (which depends on the $q_i$ and thus on $C$) such that for all $1 \le i \le k, $ \[\frac{1}{x} \abs{S_f\bra{x, \frac{a_i}{q_iPm}}} \ge \frac{\mathfrak{S}(f)}{2}\frac{|g(q_i)|}{q_i} \gg \frac{|g(q_i)|}{q_i},\] where the implied constant depends only on $f.$ 

    It then follows from \thref{large-sieve} (applied to $f\Ind_{[1,x]}$ and $Q := Pm \max_{1\le i\le k} q_i \le x^{1/2}$) that for that choice of $x$ and $y,$ we have 
    \begin{align*}
        \frac{1}{x} \sum_{-y < n < x} \abs{\sum_{\substack{n < u \le n+y \\ 1\le u \le x}} f(u) \e(\alpha u)}^2 &\gg \frac{1}{x^2} \sum_{\substack{q\le Q \\ (a,q) = 1}} \abs{S_f\bra{x,\frac{a}{q}}}^2 \abs{S_1\bra{y,\alpha - \frac{a}{q}}}^2
        \\&\gg \frac{1}{x^2} \sum_{i=1}^k \abs{S_f\bra{x, \frac{a_i}{q_iPm}}}^2 \abs{S_1\bra{y,\alpha - \frac{a_i}{q_iPm}}}^2
        \\&\gg \frac{1}{x^2} \sum_{i=1}^k \bra{x\frac{|g(q_i)|}{q_i}}^2 \frac{1}{\abs{\alpha - \frac{a_i}{q_iPm}}^2}
        \\&\gg C,
    \end{align*}
    where all the implied constants do not depend on $C.$ In other words, 
    \[\sup_{x \ge y} \frac{1}{x} \sum_{-y < n < x} \abs{\sum_{\substack{n < u \le n+y \\ 1\le u \le x}} f(u) \e(\alpha u)}^2 = +\infty.\] But if $\sup_x \abs{S_f(x,\alpha)} < \infty,$ then for $y\le x,$
        \[\frac{1}{x} \sum_{-y < n < x} \abs{\sum_{\substack{n < u \le n+y \\ 1\le u \le x}} f(u) \e(\alpha u)}^2 \le \frac{x+y}{x} \left(2\max_{n\le x} \abs{S_f(n,\al)}\right)^2 = O(1), \] contradiction.
\end{proof}

\subsection{Special cases}
We now consider the case where the exponential sum is bounded for a positive measure set of $\alpha$ (\thref{bazinmain}), as well as the case where $f$ is completely multiplicative or takes finitely many values (\thref{special}).

\begin{proof}[Proof of \thref{bazinmain}]
    If \[\sup\limits_{x}\abs{S_f(x,\al)} < \infty,\] then by \thref{setup-bazinmain} we have for some $\chi$ of conductor $m,$ $t \in\R$ and $P, g$ defined as in \thref{setup-bazinmain}, \[\sum_{\substack{a,n \\ (a,nPm) = 1}} \bra{\frac{|g(n)|}{n\abs{\alpha - \frac{a}{nPm}}}}^2 < \infty\] and in particular \[\abs{\alpha - \frac{a}{nPm}} \le \frac{|g(n)|}{nPm}\] for at most finitely many $n,a$ with $(a, nm) = 1.$ The Duffin--Schaeffer theorem \cite{duffin1941khintchine,koukoulopoulos2020duffinschaeffer} tells us this happens for a positive measure set of $\alpha$ if and only if 
    \[\sum_{n} |g(n)|\frac{\phi(nPm)}{nPm} < \infty.\] 
    As $g$ is multiplicative, this is equivalent to \[\sum_{p} \sum_{k\ge 1} |g(p^k)| \frac{p-\Ind_{p\nmid Pm}}{p} < \infty.\]
    
    We now show this is equivalent to the desired conclusion, \[\sum_{p} \sum_k \abs{f(p^k)- f(p^{k-1})\chi(p)p^{it}} = \sum_{p\nmid m} \sum_k |h(p^k)| + \sum_{p|m} \sum_k |f(p^k)| < \infty.\] 
    Recalling the definition $$g(p^r) := \chi(p^r) \frac{\sum_{k\ge 0} \frac{h\left(p^{r+k + \Ind_{p\nmid P}}\right)}{p^{k}}}{\sum_{k\ge 0} \frac{h\left(p^{k + \Ind_{p\nmid P}}\right)}{p^k}}$$ for $p\nmid m$ and $g(p^r) = f(p^r)p^{-rit}$ for $p|m,$ 
    we have
    \begin{align*}
        \sum_{p} \sum_{k\ge 1}& |g(p^k)| \frac{p-\Ind_{p\nmid Pm}}{p} \\&= \sum_{p\nmid m} \frac{1 - \frac{\Ind_{p\nmid Pm}}{p}}{\sum_{k\ge 0} \frac{h\left(p^{k + \Ind_{p\nmid P}}\right)} {p^k}} \sum_{r\ge 1 + \Ind_{p\nmid P}} \bra{|h(p^r)| + \sum_{k > r} O\bra{\frac{h(p^k)}{p^{k-r}}}} + \sum_{p|m} \sum_k |f(p^k)|
        \\&= \sum_{p\nmid m} \bra{1 + O\bra{\frac{1}{p}}} \sum_{k\ge 1 + \Ind_{p\nmid P}} |h(p^k)| \bra{1 + \sum_{r < k} O\bra{\frac{1}{p^{k-r}}}} + \sum_{p|m} \sum_k |f(p^k)|
        \\&= \sum_{p\nmid m} \sum_{k\ge 1 + \Ind_{p\nmid P}} |h(p^k)| \bra{1 + O\bra{\frac{1}{p}}} + \sum_{p|m} \sum_k |f(p^k)|. 
    \end{align*}
    Clearly, the $1 + O\bra{\frac{1}{p}}$ term and the absence of finitely many terms for $p|P, k = 1$ do not affect the convergence of the sum, hence the desired conclusion.
\end{proof}

We now consider the case where $f$ is completely multiplicative or takes finitely many values (\thref{special}), and show the exponential sum is unbounded for all irrational $\alpha$ in this case unless $f$ is exactly of the form $\chi(n) n^{it}.$

\begin{proof}[Proof of \thref{special}] 
    
    We first consider case ii). If $f: \N \ra \C$ takes finitely many values, it is not hard to show that if $f$ is $\chi(n)n^{it}$-pretentious, then in fact $t=0$ - see, for example, Lemma 5.1 of \cite{klurman2018rigidity}. 
    \smallbreak
    Since $f$ and $\chi$ take finitely many values, there is then some $\ep > 0$ such that with the notation of \thref{setup-bazinmain}, if $f(p^k) \ne f(p^{k-1})\chi(p)$, then \[\abs{f(p^k) - f(p^{k-1}) \chi(p)} > \ep.\]
    Now, we have for these $p^k,$ assuming $p\nmid Pm,$
    \[\ep \le \abs{h(p^k)} = \abs{g(p^k) -\frac{g(p^{k+1})\conj{\chi}(p)}{p}} \abs{\sum_{r} \frac{h(p^r)}{p^r}} \ll  \abs{g(p^k)} + \frac{\abs{g(p^{k+1})}}{p}.\]
    Similarly if $p\in\PP$ we have $\ep \le \abs{h(p^k)} \ll  \abs{g(p^{k-1})} + \frac{\abs{g(p^{k})}}{p},$ and if $p|m$ then $\ep \le \abs{f(p^k)} = \abs{g(p^k)}. $ Therefore, if there are infinitely many $p^k$ such that $f(p^k) \ne f(p^{k-1})\chi(p)$, there must be infinitely many $p^k$ such that $\abs{g(p^k)} > \ep'$ for some $\ep'$ (which depends only on $\ep.$)

    But for each $p$ and $k,$ we can find $a$ with $(a,p^k Pm) = 1$ and $p^k \abs{\alpha - \frac{a}{p^k Pm}} \ll_{Pm} 1,$\footnote{If $p\le 2Pm,$ take any $a$ coprime with $pPm$ and within distance $pPm \ll (Pm)^2$ of $\alpha p^k Pm.$ Otherwise, certainly there are several numbers coprime with $Pm$ within distance $Pm$ of $\alpha p^k Pm,$ and at most one of these is multiple of $p,$ so we can find a number coprime with $p^kPm$ within distance $O_{Pm}(1)$ of $\alpha p^k Pm.$} so that
    \[\sum_{\substack{a,n \\ (a,nPm) = 1}} \bra{\frac{|g(n)|}{n\abs{\alpha - \frac{a}{nPm}}}}^2 \gg_{Pm} \sum_{p} \sum_{k} \abs{g(p^k)}^2 \ge \infty\cdot {\ep^{\prime}}^2 = \infty,\] 
    hence by \thref{setup-bazinmain} $S_f(x,\alpha)$ cannot be bounded.

    We now move to case i), where $f$ is completely multiplicative. Then, assume by contradiction there is some $p_0$ such that $f(p_0) \ne \chi(p_0)p_0^{it}$ and $f(p_0) \ne 0.$ With the notations of \thref{setup-bazinmain}, we then have for $k\ge 1,$ if $p_0\nmid m,$
    \[h(p_0^k)\chi(p_0^k)p_0^{kit} = f(p_0^k) - f(p_0^{k-1}) \chi(p_0) p_0^{it} = f(p_0)^{k} \bra{1 - \frac{\chi(p_0) p_0^{it}}{f(p_0)}}\] so that $p\not\in\PP$ and for $r\ge 1,$
    \begin{align*}
        g(p_0^r) &= \frac{f(p_0)^r\bra{1 - \frac{\chi(p_0) p_0^{it}}{f(p_0)}} \sum_{k\ge 0} \bra{\frac{f(p_0) \conj{\chi}(p_0) p_0^{-it}}{p_0}}^k }{1+ \bra{1 - \frac{\chi(p_0) p_0^{it}}{f(p_0)} } \sum_{k\ge 1} \bra{\frac{f(p_0) \conj{\chi}(p_0) p_0^{-it}}{p_0}}^k} 
        \\&= \frac{f(p_0)^r \bra{1 - \frac{\chi(p_0) p_0^{it}}{f(p_0)}} }{1 - \frac{f(p_0) \conj{\chi}(p_0) p_0^{-it}}{p_0} + \bra{1 - \frac{\chi(p_0) p_0^{it}}{f(p_0)}}\frac{f(p_0) \conj{\chi}(p_0) p_0^{-it}}{p_0}}
        \\&= f(p_0)^r \bra{1 - \frac{\chi(p_0) p_0^{it}}{f(p_0)}} \frac{p_0}{p_0 - 1}.
    \end{align*} 
    Therefore, since for all $r$ we can find $a$ with $(a,p_0^r m) = 1$ and $p_0^r \abs{\alpha - \frac{a}{p_0^r m}} \ll_{m} 1,$ we have as $|f(p_0)|=1,$
    \[\sum_{\substack{a,n \\ (a,nm) = 1}} \bra{\frac{|g(n)|}{n\abs{\alpha - \frac{a}{nm}}}}^2 \gg_{m} \sum_{r} \abs{g(p_0^r)}^2 = \infty, \] hence by \thref{setup-bazinmain} $S_f(x,\al)$ cannot be bounded. If $p_0|m$ we have $\abs{g(p_0^r)} = \abs{f(p_0)}^r = 1$ and obtain the same conclusion.

    Thus, for all $p_0$ such that $f(p_0) \ne \chi(p_0)p_0^{it},$ we have $f(p_0) = 0.$ If there are infinitely many such $p_0,$ they all verify $g(p_0) = \chi(p_0) \frac{p_0}{p_0 - 1},$ so that we again have 
    \[\sum_{\substack{a,n \\ (a,nm) = 1}} \bra{\frac{|g(n)|}{n\abs{\alpha - \frac{a}{nm}}}}^2 \gg \sum_{p_0 : p_0\nmid m, f(p_0) = 0} \abs{g(p_0)}^2 = \infty. \]    
    It thus remains the case where there are finitely many such $p_0,$ in which case we actually have $f(n) = \chi'(n) n^{it}$ for some non-primitive character $\chi'$ derived from $\chi.$
\end{proof}

\section{Sharpness of main results}\label{5}

In this section we prove \thref{sharp,sharper}, which show \thref{stronger,bazinmain} are close to optimal in some sense. In particular, \thref{special} is not true for general multiplicative functions, as any multiplicative function verifying $f(p^k) = 1 + O(p^{-2k})$ furnishes a counterexample by \thref{sharp}.

\begin{prop}\thlabel{sharp}
    Define the "degree of pretentiousness" of $f$ as 
    \[\sigma(f) := \inf_{\chi,t} \sigma_a\bra{f * \mu\chi n^{it}} \in [-\infty, 1],\]
    where $h_{f,g} := f * \mu g$ and $\sigma_a(h)$ denotes the abscissa of absolute convergence of $h.$

    Then we have the following:
    \begin{enumerate}[label=\roman*)]
        \item If $\sigma(f) > 0,$ then \[\sup\limits_{x}\abs{S_f(x,\al)} = \infty\] for all $\alpha.$
        \item If $\sigma(f) < 0,$ then \[\sup\limits_{x}\abs{S_f(x,\al)} < \infty\] for almost all $\alpha.$
    \end{enumerate}
\end{prop}

\begin{prop}\thlabel{sharper}
    Assume there is a Dirichlet character $\chi$ and $t\in\R$ such that with $h := f * \mu\chi n^{it},$ we have \[\sum_d \abs{h(d)} \log \bra{2 + \frac{1}{|h(d)|}} < \infty.\] Then
    \[\sup\limits_{x}\abs{S_f(x,\al)} < \infty\] for almost all $\alpha.$
\end{prop}
\begin{proof}
    First, we note that for all irrational $\alpha$ and for all $x,$ we have with $h = f * \mu\chi n^{it}$ and by \thref{character2}
    \[S_f(x,\al) = \sum_{d\le x} h(d) \sum_{n \le x/d} \chi(n) n^{it} \e(\alpha d n) = O_{m,t}\bra{\sum_{d\le x} \frac{|h(d)|}{|\e(\alpha dm) - 1|}},\] where $m$ is the conductor of $\chi.$ Therefore it is sufficient to show $$\sum_{d} \frac{|h(d)|}{|\e(\alpha dm) - 1|} < \infty$$ for almost all $\al.$
    
    For $D \in \N,$ define $$S(D) := \{\alpha \in [0,1] : \exists d \ge D, \abs{\e(\alpha dm) - 1} < |h(d)|.\}$$
    Clearly $S(D)$ is of measure $$\lambda(S(D)) \le \sum_{d\ge D} 2\abs{h(d)} \, \ra_{D\ra\infty} 0,$$ as $\sum_{d} \abs{h(d)} < \infty.$
    Moreover, for $d \ge D,$ we have $$\int_{[0,1] \smallsetminus S(D)} \frac{1}{\abs{\e(\al dm) - 1}} \ll \log \bra{\frac{1}{|h(d)|}},$$ so that 
    \[\int_{[0,1] \smallsetminus S(D)} \sum_{d\ge D}  \frac{|h(d)|}{|\e(\alpha dm) - 1|} \ll \sum_{d\ge D} \abs{h(d)} \log \bra{\frac{1}{|h(d)|}} < \infty.\]
    In particular, $$\sum_{d} \frac{|h(d)|}{|\e(\alpha dm) - 1|} < \infty$$ for almost all $\alpha\not\in S(D).$ As the measure of $S(D)$ tends to zero as $D$ tends to infinity, it follows the above is actually true for almost all $\alpha,$ as desired.
\end{proof}

Using \thref{stronger} and \thref{sharper} we may now prove \thref{sharp}, which summarizes our results.

\begin{proof}[Proof of \thref{sharp}]
    We prove i) by contraposition. If the exponential sum is unbounded for some $\alpha,$ then by \thref{stronger} there is a Dirichlet character $\chi$ and a real number $t \in \R$ such that
    \[\sum_{p} \max_{k}\abs{f(p^k)-\chi(p^k)p^{ikt}} < \infty.\]
    In particular with $h := f * \mu\chi n^{it},$ we have $$\abs{h(p^k)} = \abs{f(p^k) - f(p^{k-1}) \chi(p)p^{it}} \le \abs{f(p^k)-\chi(p^k)p^{ikt}} + \abs{f(p^{k-1})-\chi(p^{k-1})p^{i(k-1)t}},$$ so that for all $\ep > 0,$
    \[\sum_{p} \sum_k \frac{\abs{h(p^k)}}{p^{\ep k}} \le \sum_p 2 \max_{k}\abs{f(p^k)-\chi(p^k)p^{ikt}} \cdot \sum_k \frac{1}{p^{\ep k}} < \infty.\] Thus, $h$ has an abscissa of absolute convergence $\le 0,$ so that $\sigma(f) \le 0.$

    We now prove ii). If $\sigma(f) < 0,$ then by definition we can find $\chi$ and $t$ such that $h$ defined as above has abscissa of absolute convergence $< 0.$ In particular, $\sum_{d} \abs{h(d)} \log d < \infty,$ so that
    \[\sum_d \abs{h(d)} \log \bra{2 + \frac{1}{|h(d)|}} \ll \sum_{d: |h(d)| > 1/d^2} \abs{h(d)} \log d + \sum_{d: |h(d)| \le 1/d^2} \frac{\log d}{d^2} < \infty.\] It thus follows from \thref{sharper} that $S_f(x,\al)$ is almost always bounded in this case.
\end{proof}

\section*{Acknowledgements}
The second and third authors would like to thank Oleksiy Klurman for putting them in touch with each other, and for many useful discussions and conversations, as well as suggesting the problem. The first author was added to this paper after the third author visited Paris at the invitation of R{\'e}gis de la Bret{\`e}che, which led to the authors proving a stronger version of \thref{stronger,special} and a new result (\thref{bazinmain}). We would like to thank R{\'e}gis de la Bret{\`e}che for his hospitality and encouragement during the visit, as well as helpful discussions.  F.T. would additionally like to thank Christopher Atherfold and Besfort Shala for interesting discussions, Misha Rudnev for moral support, and C{\'e}dric Pilatte for discussion on \thref{conjecture}. The authors also thank  R{\'e}gis de la Bret{\`e}che, Oleksiy Klurman and Besfort Shala for useful comments on an earlier version of this paper.
\smallbreak
The third author is supported by a Heilbronn Doctoral Partnership.

\printbibliography
\end{document}